\crefname{hypothesis}{Hypothesis}{Hypotheses}
\title{Robust multigrid techniques for augmented Lagrangian preconditioning of
  incompressible Stokes equations with extreme viscosity
  variations\thanks{Submitted to the editors DATE.
    \funding{This work was partially supported by the US National Science
      Foundation (NSF) through grant EAR \#1646337, and by the SciDAC
      program funded by the U.S.\ Department of Energy, Office of
      Science, Advanced Scientific Computing Research, and Biological
      and Environmental Research Programs, and a grant from the
Simons Foundation (560651).}}}
 \author{Yu-hsuan Shih\thanks{Courant Institute, New York
       University, New York, USA (\email{shihyh@cims.nyu.edu},
       \email{wechsung@cims.nyu.edu}, \email{stadler@cims.nyu.edu}).}
     \and Georg Stadler$^\dagger$ \and Florian Wechsung$^\dagger$}
\newcommand{\boldvar}[1]{\ensuremath{\boldsymbol{#1}}}
\newcommand{\gradient}{\nabla}
\newcommand{\divergence}{\nabla\cdot}
\newcommand{\vel}{\boldvar{u}}
\newcommand{\veltest}{\boldvar{v}}
\newcommand{\press}{p}
\newcommand{\presstest}{q}
\newcommand{\visc}{\mu}
\newcommand{\trans}{\mathsf{T}}
\newcommand{\transpose}{^{\trans}}
\newcommand{\rhsvel}{\boldvar{f}}
\newcommand{\strainrate}{\dot{\varepsilon}}
\newcommand{\strainratetensor}{\dot{\boldvar{\varepsilon}}}
\newcommand{\viscstress}{\tau}
\newcommand{\viscstresstensor}{\boldvar{\tau}}
\newcommand{\invII}[1]{#1_\textsc{ii}} 
\newcommand{\strainrateinvII}{\invII{\strainrate}}
\newcommand{\viscstressinvII}{\invII{\viscstress}}
\newcommand{\viscstressyield}{\viscstress_{\mathrm{y}}}
\newcommand{\viscref}{{\visc_{\mathrm{r}}}}
\newcommand{\viscmin}{\underline\visc}
\newlength{\dhatheight}
\begin{document}

\maketitle

\begin{abstract}
We present augmented Lagrangian Schur complement preconditioners and
robust multigrid methods for incompressible Stokes problems with
extreme viscosity variations. Such Stokes systems arise, for instance,
upon linearization of nonlinear viscous flow problems, and they can
have severely inhomogeneous and anisotropic coefficients. Using an
augmented Lagrangian formulation for the incompressibility constraint
makes the Schur complement easier to approximate, but results in a
nearly singular (1,1)-block in the Stokes system. We present
eigenvalue estimates for the quality of the Schur complement
approximation.
To cope with the near-singularity of the
(1,1)-block, we
extend a multigrid scheme with a discretization-dependent
smoother and transfer operators from triangular/tetrahedral to the 
quadrilateral/hexahedral
finite element discretizations $[\mathbb{Q}_k]^d\times \mathbb{P}_{k-1}^{\text{disc}}$, $k\geq 2$, $d=2,3$.
Using numerical examples with scalar
and with anisotropic fourth-order tensor viscosity arising
from linearization of a viscoplastic constitutive relation, we confirm
the robustness of the multigrid scheme and the overall efficiency of
the solver. We present scalability results using up to
28,672 parallel tasks for problems with up to 1.6 billion unknowns and
a viscosity contrast up to ten orders of magnitude.
\end{abstract}

\begin{keywords}
Incompressible Stokes, variable viscosity, preconditioning,
augmented Lagrangian method, parameter-robust multigrid
\end{keywords}

\begin{AMS}
  65F08, 
  65F10, 
  65N55, 
  65Y05, 
  76D07  
\end{AMS}

\section{Introduction}
Viscous flows governed by equations with strongly nonlinear and/or
inhomogeneous rheologies play an important role in
applications.  They are, for instance, used to describe flows in
porous media \cite{Bear13}, the behavior of the solid earth over long
time scales \cite{Ranalli95}, the dynamics of continental ice sheets
and glaciers \cite{Hutter83}, and the phenomenological behavior of
colloidal dispersions \cite{MewisWagner12}. These and other phenomena
can be described by the incompressible Stokes equations on a domain
$\Omega\subset\mathbb R^{d}$, $d=2,3$,
\begin{subequations}
\label{eq:stokes}
\begin{alignat}{2}
  \label{eq:momentum}
  - \divergence
    \bigl[ \visc(\boldvar x, \strainrateinvII) \, (\gradient\vel + \gradient\vel\transpose) \bigr]
  + \gradient\press &= \rhsvel
  &&\quad\text{in }\Omega,
  \\
  \label{eq:mass}
  - \divergence\vel &= 0
  &&\quad\text{in }\Omega,
\end{alignat}
\end{subequations}
where $\vel$ and $\press$ are the velocity and pressure fields and
$\rhsvel$ is a volumetric force. The viscosity
$\visc(\boldvar x, \strainrateinvII)$ may depend explicitly on
$\boldvar x\in \Omega$, but also on the unknown solution, typically on
the second invariant of the strain rate tensor $\strainrateinvII$.
For incompressible velocity $\vel$, $\strainrateinvII$ is given by
$\strainrateinvII := (\frac12\,\strainratetensor(\vel) :
\strainratetensor(\vel))^{1/2}$, where $\strainratetensor(\vel) :=
\frac12 (\gradient\vel + \gradient\vel\transpose)$ is the strain rate
tensor.  The dependence of the viscosity on the solution makes
\eqref{eq:stokes} nonlinear, and thus the solution of
\eqref{eq:stokes} requires linearization. This nonlinearity and/or the
explicit spatial dependence of the viscosity can lead to localized
solution features, e.g., when narrow shear zones occur through strain weakening,
or when geometric features are incorporated through a spatially varying
viscosity. Resolving such localized
features in numerical simulations typically requires (locally) refined
meshes, resulting in large and poorly conditioned (non)linear
systems of equations to be solved. Such systems, which can easily have
tens or hundreds of millions of unknowns, require robust, efficient
and scalable iterative solvers and preconditioners. This paper
presents solvers for linearizations of \eqref{eq:stokes} that result
in Stokes problems with severely inhomogeneous and anisotropic
viscosities.

\subsection{Linearization and discretization}
Linearization of the nonlinear Stokes equations \eqref{eq:stokes} is
typically based on a Picard or a Newton method. The Picard method is a
fixed point iteration that requires solution of a sequence of
linearized Stokes problems with \emph{scalar} viscosity function. It
is well documented in the literature that fixed point methods can
converge slowly, in particular for strongly nonlinear rheologies,
e.g., for problems with viscous-plastic behavior
\cite{SpiegelmanMayWilson16, GrinevichOlshanskii09}. Newton's method
for \eqref{eq:stokes} requires to solve linearized Stokes problems
that involve the sum of a scalar viscosity
and an anisotropic fourth-order tensor viscosity.  This
additional tensor results from linearization of the viscosity with respect
to the velocity due to its dependence on the second invariant of the
strain rate, leading to linearizations of the form
\begin{subequations}
\label{eq:stokes-lin}
\begin{alignat}{2}
  \label{eq:momentum-inc}
  - \divergence
    \Bigl[\Bigl( \visc\boldvar{I} +
      \frac{\partial \visc}{\partial \strainrateinvII}
      \strainratetensor\otimes \strainratetensor \Bigr)
      (\gradient\tilde\vel + \gradient\tilde\vel\transpose) \Bigr]
  + \gradient\tilde \press &= \boldvar r_1
  &&\quad\text{in }\Omega,
  \\
  \label{eq:mass-lin}
  - \divergence\tilde\vel &= r_2
  &&\quad\text{in }\Omega,
\end{alignat}
\end{subequations}
where $\tilde\vel$ and $\tilde\press$ are the velocity and pressure
Newton update variables. Here, the viscosity $\visc$ and strain
rate tensor $\strainratetensor$ are evaluated at the previous velocity
iterate, $\boldvar r_1$ and $r_2$ denote  momentum and mass
equation residuals, and $\otimes$ denotes the outer product between
second-order tensors. Upon discretization, \eqref{eq:stokes-lin}
results in a typical block matrix system of the form
\begin{equation}
\label{eq:stokessys-disc}
\begin{bmatrix}
	\mathbf A & \mathbf B^T\\
	\mathbf B & \mathbf 0
\end{bmatrix}
\begin{bmatrix}
	\tilde {\mathbf u}\\
	\tilde {\mathbf p}
\end{bmatrix}
=
\begin{bmatrix}
	\mathbf r_1\\
	\mathbf r_2
\end{bmatrix},
\end{equation}
where $\mathbf B$ is the discrete divergence operator, and $\mathbf A$
is a discretization of the viscous stress operator.
Even when the viscosity is an anisotropic tensor,
$\mathbf A$ is typically positive definite if reasonable boundary
conditions for the Stokes problem are assumed.  Although the
anisotropic term can degrade the efficiency of iterative solvers,
preconditioners have almost exclusively been studied for scalar
variable viscosity problems \cite{GrinevichOlshanskii09,
  ElmanSilvesterWathen14, RudiStadlerGhattas17, MayMoresi08}. We will illustrate that the solvers we propose
are also able to robustly handle discretizations of \eqref{eq:stokes-lin}
that include anisotropic viscosity.

\subsection{Preconditioning}
The efficiency of iterative Krylov solvers for
\eqref{eq:stokessys-disc} crucially depends on the availability of
effective preconditioners. Arguably, the most popular preconditioners
are based on approximate inversion of the block matrix
\begin{equation}
\label{eq:block-precond}
\begin{bmatrix}
    \mathbf A & \mathbf O\\
    \mathbf O & -\mathbf S
\end{bmatrix} \quad \text{with } \mathbf S := \mathbf B\mathbf A^{-1}\mathbf B^T
\end{equation}
being the Schur complement. Since computing the Schur complement
matrix explicitly is infeasible for large-scale problems, one typically
relies on Schur complement approximations. The most important
approximations are
(weighted) finite element mass matrices
\cite{BursteddeGhattasStadlerEtAl09,GrinevichOlshanskii09,
GeenenurRehmanMacLachlanEtAl09,FuruichiMayTackley11,
ElmanSilvesterWathen14,MayBrownPourhiet15, IsaacStadlerGhattas15} 
and algebraic, so-called BFBT, approximations
\cite{ElmanSilvesterWathen14, RudiStadlerGhattas17, MayMoresi08, RudiMalossiIsaacEtAl15}.
These approximations, and thus the efficiency of the corresponding
preconditioners degrade for very strong viscosity variations or tensor
viscosities as in \eqref{eq:stokes-lin}; see the discussion in
\cref{sec:discreteschur}.

In this paper, we follow the augmented Lagrangian (AL) approach
(see~\cite{FortinGlowinski00, BenziOlshanskii06}), which
replaces the (1,1) block in
\eqref{eq:stokessys-disc} with
$\mathbf A+\gamma \mathbf B^T\mathbf W^{-1} \mathbf B$,
where $\mathbf W^{-1}$ is a positive definite matrix
and $\gamma> 0$. With accordingly modified right hand side of the system 
\eqref{eq:stokessys-disc}, 
this does not change the Stokes solution.
The resulting formulation has the advantage
that its Schur complement is much easier
to approximate for sufficiently large $\gamma$.
However, this simplification comes at the cost
of introducing a term to the (1,1)-block that has a large null space, 
which makes its inversion more difficult.
To invert the (1,1)-block, Benzi and Olshanskii~\cite{BenziOlshanskii06} use a multigrid
algorithm developed by Sch\"oberl~\cite{Schoeberl99} that uses custom smoothing and prolongation
operators and thus does not degrade for large $\gamma$.
However, this multigrid algorithm is highly element specific and hence much of
the subsequent work utilizing AL techniques has either
utilized matrix
factorizations~\cite{deniet2007,urrehman2008,borm2010,he2011,he2012,heister2012}
or block triangular approximations~\cite{benzi2011,hamilton2010,Benzi11b} of
the (1-1)-block.  Recently, in part due to advances in scientific computing
libraries that make the implementation of
advanced multigrid schemes more straightforward~\cite{Kirby18,FarrellKnepleyMitchellWechsung21},
there has been renewed effort to develop and implement robust multigrid schemes
in this context~\cite{FarrellMitchellWechsung19,
FarrellMitchellScottWechsung20a, Xia21,Laakmann2021}.  The discussion of such
methods and their extension to quadrilateral and hexahedral elements is a main
focus of this paper.
We note that augmented Lagrangian preconditioners in the context of variable
viscosity were already studied in~\cite{he2012}.  The differences in our work
are the use of viscosity weighted mass matrices in the Schur complement and the
aforementioned robust multigrid scheme for the (1,1)-block (instead of a direct
solver or algebraic multigrid scheme).  These differences enable us to consider
significantly larger viscosity contrasts and to solve large scale problems in
three dimensions.

An alternative to the above Schur complement-based approaches is to
consider a monolithic method that applies multigrid to the saddle
point system directly. Examples of the associated smoothers applied to
incompressible Stokes equations include
smoothers\cite{BorzacchielloLericheBlottiereGuillet17,
  BraessSarazin97, DrzisgaJohnRudeWohlmuthZulehner18,
  MingLong13}. Stokes problems with variable viscosity are considered
in \cite{BorzacchielloLericheBlottiereGuillet17}, where the authors
show that the robustness of the monolithic multigrid method with
respect to viscosity variation depends on the choice of the
smoother. They propose two Vanka-type smoothers and, for a
test problem, the resulting monolithic multigrid scheme remains
effective up to $10^7$ viscosity contrast.

\subsection{Contributions and limitations}
The main \emph{contributions} in this paper are:
(1) We prove mesh-independent eigenvalue estimates for the Schur
complement approximation of the augmented system in terms of Schur
complement approximations of \eqref{eq:stokes-lin}.
(2) We extend results for parameter-robust multigrid solvers to
element pairings on quadrilateral and hexahedral meshes using novel
arguments to prove the
kernel decomposition property.
(3) We illustrate the efficiency of our preconditioner for linear and
nonlinear problems with up to 10 orders viscosity variation and up to
1.6 
billion unknowns.

The \emph{limitations} of our work are as follows.
%
(1) Our theoretical estimates for the Schur complement approximation use
properties of the Stokes problem and generalization to Navier Stokes
or Oseen problems might not be straightforward.
(2) The parameter-independent smoothers we construct require assembled
stiffness matrices.

\subsection{Notation}
\label{sec:not}
Here, we summarize notation used throughout the paper.  For a
measurable set $G\subset \mathbb{R}^d$, $d=2,3$, we denote by
$(u,v)_{L^2(G)}$ and $\|u\|_{L^2(G)}$ the inner product and the
induced norm in $L^2(G)$, respectively.  When $G=\Omega$, we simply
write $(u,v)$ and $\|u\|_0$.  We use $L^2_0(G)$ to denote the quotient
of $L^2(G)$ with the constant functions, i.e., $L^2_0(G):= \{q\in
L^2(G): (q, 1)_{L^2(G)} =0\}$. For $Q\subset L^2(G)$, we use $\Pi_Q$
to denote the $L^2$-projection operator onto $Q$.
In addition, we denote by
$|u|_{H^1(G)}^2:=(\nabla u, \nabla u)_{L^2(G)}$
and $\|u\|_{H^1(G)}^2:= (u,u)_{L^2(G)}+(\nabla u, \nabla u)_{L^2(G)}$ 
the squared seminorm and norm in the Sobolev space $H^1(G)$,
respectively.  When $G=\Omega$, we simplify the notations to $|u|_1^2$
and $\|u\|_1$.  We denote by $H^1_0(G):=\{u\in H^1(G): u=0 \text{ on
}\partial G\}$ the subspace of $H^1(G)$ containing $H^1(G)$
function that satisfies homogeneous Dirichlet boundary conditions.
Additionally, we use the following notation in estimates.
For $\mathbf A$, $\mathbf B \in \mathbb{R}^{n\times n}$ 
being two symmetric positive definite matrices, $\mathbf A \leq \mathbf B$ means 
that $\mathbf x^T\mathbf A\mathbf x \leq \mathbf x^T\mathbf B \mathbf x$
for all $\mathbf x \in \mathbb{R}^{n}$;
For PDE-discretization matrices  $\mathbf A$, $\mathbf B \in
\mathbb{R}^{n\times n}$,
$\mathbf A \preceq \mathbf B$ means that there exist a mesh-independent constant 
$c$ such that $\mathbf A \leq c\mathbf B$. 
The same notation is also used for scalars derived from discretization
matrices, i.e., $a\preceq b$ means that there is
a mesh-independent constant $c$ such that $a\leq cb$.

\section{Discretization and Schur complement preconditioning}
\label{sec:discreteschur}
The main focus of this paper is on the linearized Stokes problem
\eqref{eq:stokes-lin}. For the analysis presented in the next
sections, we use homogeneous Dirichlet boundary conditions and
consider a problem with scalar viscosity field $\visc(\boldvar
x)\in\mathbb{R}$, which only depends on the spatial variable $\boldvar
x$. However, throughout the remainder of this paper, we comment on practical
aspects when the viscosity is a tensor as in \eqref{eq:stokes-lin},
and present numerical results with anisotropic fourth-order tensor
viscosities in \cref{sec:tensor}.
For simplicity of notation, in the following we use
$\vel,\press$ instead of $\tilde\vel, \tilde\press$, resulting in
\begin{subequations}
\label{eq:stokes-linear}
\begin{alignat}{2}
  - \divergence
    \bigl[ 2\visc(\boldvar x) \, \strainratetensor(\vel) \bigr]
  + \gradient\press &= \boldvar r_1
  &&\quad\text{in }\Omega,
  \\
  - \divergence\vel &= r_2
  &&\quad\text{in }\Omega,\\
  \vel &= 0 &&\quad\text{in }\partial\Omega.
\end{alignat}
\end{subequations}
The weak form of \eqref{eq:stokes-linear} is as follows:
given $\boldvar r_1 \in \left(H^{-1}(\Omega)\right)^d$ and $r_2 \in L^2(\Omega)$, find $\vel \in
\left(H^1_0(\Omega)\right)^d$, $d=2,3$, and $\press
\in L^2_0(\Omega)$ such that
\begin{subequations}
\label{eq:weakstokes}
\begin{alignat}{2}
	a(\vel,\veltest) - (\divergence \veltest,\press) &=
        \langle\boldvar r_1,\veltest\rangle
	&&\quad
	\forall \veltest \in (H^1_0(\Omega))^d,\\
	-(\divergence \vel,q) &= (r_2,q) &&\quad \forall q \in L^2_0(\Omega),
\end{alignat}
\end{subequations}
where $a(\vel,\veltest) = (2\visc(\boldvar x)
\strainratetensor(\vel),\strainratetensor(\veltest))$ and
$\strainratetensor(\vel)=\frac12(\gradient\vel + \gradient\vel\transpose)$ 
is the strain rate tensor. 
Choosing finite element spaces $V_h\subset
(H^1_0(\Omega))^d$ and $Q_h\subset L^2_0$ for velocity and pressure, 
respectively, the discrete algebraic
system corresponding to \eqref{eq:weakstokes} becomes
\begin{equation}
\label{eq:stokessys}
\begin{bmatrix}
	\mathbf A & \mathbf B^T\\
	\mathbf B & \mathbf 0
\end{bmatrix}
\begin{bmatrix}
	\mathbf u\\
	\mathbf p
\end{bmatrix}
=
\begin{bmatrix}
	\mathbf r_1\\
	\mathbf r_2
\end{bmatrix},
\end{equation}
where 
$[\mathbf A]_{i,j}=(2\visc(\boldvar
x)\strainratetensor(\phi_i),\strainratetensor(\phi_j))$ is the 
discrete viscous stress operator, 
$[\mathbf B]_{i,j} = -(\psi_i, \divergence \phi_j)$ is the discrete
divergence operator and 
$[\mathbf B^T]_{i,j}=-(\divergence \phi_i, \psi_j)$ is the discrete gradient
operator. Here, we denote the velocity and pressure basis functions by
$\phi_i$ and $\psi_j$, respectively.
A widely used class of preconditioners for saddle point
systems of the form \eqref{eq:stokessys} are based on the block matrix
identity
\begin{equation}
\begin{bmatrix}
	\mathbf A & \mathbf B^T\\
	\mathbf B & \mathbf 0
\end{bmatrix}
=
\begin{bmatrix}
    \mathbf I & \mathbf O\\
    \mathbf B\mathbf A^{-1} & \mathbf I
\end{bmatrix}
\begin{bmatrix}
    \mathbf A & \mathbf O\\
    \mathbf O & -\mathbf S
\end{bmatrix}
\begin{bmatrix}
    \mathbf I & \mathbf A^{-1}\mathbf B^T\\
    \mathbf O & \mathbf I
\end{bmatrix},
\end{equation}where 
$\mathbf S := \mathbf B\mathbf A^{-1}\mathbf B^T$ is the Schur
complement. This identity motivates that
\eqref{eq:stokessys} can be preconditioned by 
\begin{equation}
\label{eq:precond}
\mathbf P =
\begin{bmatrix}
    \mathbf I & -\hat{\mathbf A}^{-1}\mathbf B^T\\
    \mathbf 0 & \mathbf I
\end{bmatrix}
\begin{bmatrix}
	\hat{\mathbf A}^{-1} & \mathbf O\\
	\mathbf O & -\hat{\mathbf S}^{-1}
\end{bmatrix}
\begin{bmatrix}
    \mathbf I & \mathbf 0\\
	-\mathbf B\hat{\mathbf A}^{-1} & \mathbf I
\end{bmatrix},
\end{equation}
with appropriate choices of $\hat{\mathbf A}$ and $\hat{\mathbf S}$ such that
$\hat{\mathbf A}^{-1}\approx \mathbf A^{-1}$ and $\hat{\mathbf S}^{-1} 
\approx \mathbf S^{-1}$.

Hence, the efficiency of preconditioning with $\mathbf P$ relies on
the availability of good approximations of $\mathbf A^{-1}$ and the
inverse Schur complement $\mathbf S^{-1}$. The Schur
complement typically cannot be computed explicitly for large-scale
problems and one must rely on approximations, and different
approximations result in different preconditioning strategies.
One common choice of the Schur complement approximation is to use
the inverse viscosity-weighted pressure mass matrix
 or its diagonalized versions obtained, for instance, by mass lumping
\cite{BursteddeGhattasStadlerEtAl09, GeenenurRehmanMacLachlanEtAl09,
FuruichiMayTackley11, MayBrownPourhiet15, IsaacStadlerGhattas15,
GrinevichOlshanskii09, ElmanSilvesterWathen14}.
The entries of the inverse viscosity-weighted pressure mass matrix $\mathbf
M_p(1/\visc)$ are given by
\begin{equation}\label{eq:inv-visc-mass}
	[\mathbf M_p(1/\visc)]_{i,j} 
	:= \left({\visc}^{-1}
           \psi_i, \psi_j\right).
\end{equation}
Both the pressure mass matrix $\mathbf M_p := \mathbf M_p(1)$ and 
the inverse viscosity-weighted pressure mass matrix are spectrally equivalent
to the Schur complement \cite{GrinevichOlshanskii09}. It is known that $\mathbf
M_p(1/\visc)$ offers an improvement over $\mathbf M_p$ as an approximation of 
the Schur complement when the viscosity is non-constant.  
However, as discussed and demonstrated in \cite{RudiStadlerGhattas17}, for 
applications with extreme viscosity variations, 
$\mathbf M_p(1/\visc)$ becomes a poor approximation of the Schur
complement, which slows down
the convergence of the iterative solvers. Additionally, for problems
in which the 
viscosity includes an anisotropic term, it is unclear how that term
can be incorporated when $\mathbf M_p(1/\visc)$ is used as Schur
complement approximation---the anisotropic part of the viscosity is thus typically
dropped and only the isotropic component used.

BFBT approximations for the Schur complement, also known as least-squares 
commutators \cite{ElmanSilvesterWathen14}, have also been considered. 
The approximations are of the form 
\begin{equation}
	\hat{\mathbf S}^{-1}_{\text{BFBT}} = (\mathbf B\mathbf C^{-1}\mathbf B^T)^{-1}
	           (\mathbf B\mathbf C^{-1}\mathbf A\mathbf C^{-1}\mathbf B^T)
	           (\mathbf B\mathbf C^{-1}\mathbf B^T)^{-1},
\end{equation}
with some matrix $\mathbf C$. Such an algebraic approach can be favourable
for problems with non-scalar
viscosity as long as $\mathbf C$ is well-defined.  A common drawback
of BFBT approximations is that adjustments are required to accommodate
Dirichlet boundary conditions \cite{ElmanSilvesterWathen14,RudiStadlerGhattas17}, which
increases the complexity of the implementation.  Not surprisingly, the
quality of the approximation depends on the matrix $\mathbf
C$. It has been shown that with appropriate choice of $\mathbf C$, in
particular, $\text{diag}(\mathbf A)$ \cite{MayMoresi08} and
$\tilde{\mathbf M}_{\vel} (\sqrt{\visc(\boldvar x)})$ the lumped
velocity mass matrix weighted by the square root of the viscosity
\cite{RudiStadlerGhattas17}, using $\hat{\mathbf
  S}^{-1}_{\text{BFBT}}$ as the Schur complement approximation leads
to a faster convergence compared to using using $\mathbf M_p(1/\visc)$
as Schur complement approximation for problems with extreme viscosity
variations. Both choices for $\mathbf C$, however, have limitations:
for $\mathbf C=\text{diag}(\mathbf A)$, the effectiveness of the Schur
complement approximation deteriorates with increasing order of the
discretization, $k$, \cite{RudiStadlerGhattas17};
using $\mathbf C=\tilde{\mathbf M}_{\vel} (\sqrt{\visc(\boldvar x)})$
overcomes this limitation and achieves a robust convergence with
respect to the order $k$, but the definition of $\mathbf C$ requires a
scalar viscosity field.

\section{Augmented Lagrangian preconditioning}
\label{sec:AL}
The augmented Lagrangian (AL) approach replaces \eqref{eq:stokessys} with 
the equivalent linear system
\begin{equation}
\label{eq:augstokessys}
\begin{bmatrix}
	\mathbf A+\gamma \mathbf B^T\mathbf W^{-1} \mathbf B & \mathbf B^T\\
	\mathbf B & \mathbf 0
\end{bmatrix}
\begin{bmatrix}
	\mathbf u\\
	\mathbf p
\end{bmatrix}
=
\begin{bmatrix}
    \mathbf r_1+\gamma \mathbf B^T\mathbf W^{-1} \mathbf r_2\\
    \mathbf r_2
\end{bmatrix},
\end{equation}
for some positive definite $\mathbf W^{-1}\in \mathbb R^{m\times m}$.
Due to $\mathbf B\mathbf u = \mathbf r_2$, any solution to \eqref{eq:augstokessys} is also a solution to
\eqref{eq:stokessys}. In particular, if
$\mathbf r_2=0$, we obtain the more familiar form of the
incompressible Stokes
problem. 
We denote the augmented (1,1)-block by $\mathbf A_{\gamma} := \mathbf A+\gamma \mathbf B^T\mathbf
W^{-1}\mathbf B$ and consider the Schur complement
$\mathbf S_\gamma := \mathbf B\mathbf 
A_{\gamma}^{-1}\mathbf B^T$.
Using the Sherman-Morrison-Woodbury identity, one can derive that
\cite[Lemma 5.2]{Wechsung19}
\begin{equation}\label{eq:AL-Schur}
\mathbf S_\gamma^{-1}=\mathbf S^{-1} + \gamma \mathbf W^{-1}.
\end{equation}
Hence, an approximation of $\mathbf S_\gamma^{-1}$ can be obtained as
$\hat{\mathbf S}^{-1} + \gamma \mathbf W^{-1}$ with $\hat{\mathbf S}^{-1}
\approx \mathbf S^{-1}$.
We now aim to identify choices for $\hat{\mathbf S}$ and $\mathbf W$
that result in provably good approximations of $\mathbf S_\gamma^{-1}$
and in an effective and practical preconditioner. It turns out that
good choices for $\mathbf W$ are mass matrices, inverse
viscosity-weighted mass matrices and their lumped counterparts, i.e.,
the Schur complement approximations discussed in
\cref{sec:discreteschur}. 
Since we consider
candidates for $\hat{\mathbf S}$ and $\mathbf W$ that are spectrally equivalent
to the Schur complement $\mathbf S$ of the original system, we recall the
definition of spectral equivalence and introduce the corresponding constants.
The symmetric positive definite matrices $\hat{\mathbf S}$ and 
$\mathbf W$ are spectrally equivalent to the Schur
complement $\mathbf S$ if they satisfy 
\begin{equation}
	c_\mu \hat{\mathbf S} \leq \mathbf S \leq C_\mu \hat{\mathbf S}, 
	\quad d_\mu \mathbf W \leq \mathbf S \leq D_\mu \mathbf W,
	\quad e_\mu \mathbf W \leq \hat{\mathbf S} \leq E_\mu \mathbf W
	\label{eq:speceqvuiS1S2}
\end{equation}
with mesh independent constants  
$c_\mu, C_\mu, d_\mu, D_\mu, e_\mu, E_\mu>0$. 
Note that the third identity in \eqref{eq:speceqvuiS1S2} follows from the first
two, but possibly with suboptimal constants.
The subscript $\mu$ indicates that the constants may depend on the viscosity. 
For example, following
\cite[Lemma 3.1]{GrinevichOlshanskii09},
for $\hat{\mathbf S}$ being the inverse viscosity-weighted mass matrix
$\mathbf M_p(1/\mu)$, $c_\mu$ and $C_\mu$ can be chosen as
$c_0 \mu_{\max}^{-1}$ and 
$\mu_{\min}^{-1}$, respectively, with mesh independent constant $c_0>0$, 
$\mu_{\max}=\sup_{\Omega} \mu(\boldvar x)$ and 
$\mu_{\min}=\inf_{\Omega} \mu(\boldvar x)$.
The following lemma establishes a quantitative result for
the spectral equivalence on $\mathbf S_\gamma$ and $\hat{\mathbf
  S}_\gamma$.
\begin{lemma}[Eigenvalue bounds] Assume $\hat{\mathbf S}$ and $\mathbf
  W$ satisfy \eqref{eq:speceqvuiS1S2}.
  Then,
\label{lemma:spectrumest}
$\hat{\mathbf S}_\gamma = (\hat{\mathbf S}^{-1}+\gamma \mathbf W^{-1})^{-1}$
is spectrally equivalent to the Schur complement $\mathbf
S_\gamma$ of the augmented system, and
the spectrum satisfies
$\sigma\left(\hat{\mathbf S}_\gamma^{-1}\mathbf S_\gamma\right)\subset [f_\mu, F_\mu]$,
where
\begin{subequations}
\begin{align}
	f_\mu &:=
	\max\left(\frac{c_\mu}{1 + \gamma c_\mu E_\mu} + \frac{\gamma d_\mu}{1 + 
	\gamma d_\mu}~,~\frac{1 + \gamma e_\mu}{\max(1, c_\mu^{-1}) + \gamma
	e_\mu}\right), \label{eq:f_mu}\\
	F_\mu &:=
	\min\left(\frac{C_\mu}{1 + \gamma C_\mu e_\mu} + \frac{\gamma D_\mu}{1 +
	\gamma D_\mu}~,~\frac{1 + \gamma e_\mu}{\min(1, C_\mu^{-1}) + \gamma
	e_\mu}\right).\label{eq:F_mu}
\end{align}
\end{subequations}
Moreover, $f_\mu, F_\mu\rightarrow 1$ as $\gamma \rightarrow\infty$.
\end{lemma}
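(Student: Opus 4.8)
The plan is to reduce the matrix eigenvalue problem $\sigma(\hat{\mathbf S}_\gamma^{-1}\mathbf S_\gamma)$ to a scalar optimization over a one-parameter family, and then to bound that scalar in two complementary ways that reproduce the two terms inside the $\max$ and $\min$ defining $f_\mu$ and $F_\mu$. First I would pass to the inverses, which are the quantities actually controlled by \eqref{eq:speceqvuiS1S2}. By definition $\hat{\mathbf S}_\gamma^{-1}=\hat{\mathbf S}^{-1}+\gamma\mathbf W^{-1}$, and by \eqref{eq:AL-Schur} $\mathbf S_\gamma^{-1}=\mathbf S^{-1}+\gamma\mathbf W^{-1}$; both are symmetric positive definite. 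Since $\sigma(\mathbf A\mathbf B)=\sigma(\mathbf B\mathbf A)$, the spectrum of $\hat{\mathbf S}_\gamma^{-1}\mathbf S_\gamma=(\hat{\mathbf S}^{-1}+\gamma\mathbf W^{-1})(\mathbf S^{-1}+\gamma\mathbf W^{-1})^{-1}$ coincides with the generalized eigenvalues of the pencil $(\hat{\mathbf S}^{-1}+\gamma\mathbf W^{-1},\,\mathbf S^{-1}+\gamma\mathbf W^{-1})$, which, after a congruence by $(\mathbf S^{-1}+\gamma\mathbf W^{-1})^{1/2}$, lie in the range of the Rayleigh quotient $\rho(\mathbf x):=\frac{\mathbf x\transpose(\hat{\mathbf S}^{-1}+\gamma\mathbf W^{-1})\mathbf x}{\mathbf x\transpose(\mathbf S^{-1}+\gamma\mathbf W^{-1})\mathbf x}$ over $\mathbf x\neq\mathbf 0$. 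It therefore suffices to bound $\rho$ from below by $f_\mu$ and from above by $F_\mu$.

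Next I would localize the estimates. Fixing $\mathbf x\neq\mathbf 0$ and writing $a:=\mathbf x\transpose\mathbf S^{-1}\mathbf x$, $\hat a:=\mathbf x\transpose\hat{\mathbf S}^{-1}\mathbf x$ and $w:=\mathbf x\transpose\mathbf W^{-1}\mathbf x$ (all positive), the quotient becomes $\rho=\frac{\hat a+\gamma w}{a+\gamma w}$. Inverting the matrix inequalities \eqref{eq:speceqvuiS1S2} (using $0<\mathbf A\le\mathbf B\Rightarrow\mathbf B^{-1}\le\mathbf A^{-1}$) and evaluating at $\mathbf x$ turns them into the scalar relations $c_\mu a\le\hat a\le C_\mu a$, $d_\mu a\le w\le D_\mu a$ and $e_\mu\hat a\le w\le E_\mu\hat a$. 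The first family of bounds comes from the splitting $\rho=\frac{\hat a}{a+\gamma w}+\frac{\gamma w}{a+\gamma w}$: for the upper bound I bound the first summand using $w\ge e_\mu\hat a$ together with monotonicity in $\hat a/a\le C_\mu$, and the second using $w\le D_\mu a$, obtaining $\frac{C_\mu}{1+\gamma C_\mu e_\mu}+\frac{\gamma D_\mu}{1+\gamma D_\mu}$; the mirror choices $w\le E_\mu\hat a$, $\hat a/a\ge c_\mu$, $w\ge d_\mu a$ give the lower bound $\frac{c_\mu}{1+\gamma c_\mu E_\mu}+\frac{\gamma d_\mu}{1+\gamma d_\mu}$. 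The second family comes from the substitution $\beta:=a/\hat a\in[C_\mu^{-1},c_\mu^{-1}]$ and $\omega:=w/\hat a\in[e_\mu,E_\mu]$, under which $\rho=\frac{1+\gamma\omega}{\beta+\gamma\omega}$; this is decreasing in $\beta$ and monotone in $\omega$ with the sign of $\beta-1$, and optimizing over the admissible box yields $\frac{1+\gamma e_\mu}{\min(1,C_\mu^{-1})+\gamma e_\mu}$ (above) and $\frac{1+\gamma e_\mu}{\max(1,c_\mu^{-1})+\gamma e_\mu}$ (below). Taking the sharper of the two bounds in each direction gives exactly $f_\mu$ and $F_\mu$.

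I expect the main obstacle to be the $\omega$-monotonicity flip in the second family: its direction depends on the sign of $\beta-1$, that is, on whether $C_\mu$ and $c_\mu$ exceed $1$, and this is precisely what the terms $\min(1,C_\mu^{-1})$ and $\max(1,c_\mu^{-1})$ encode. The delicate point is to verify that the stated expressions remain valid upper and lower bounds in both regimes---in particular that $\rho\le1$ whenever $C_\mu\le1$ (since then $\hat a\le a$) and $\rho\ge1$ whenever $c_\mu\ge1$ (since then $\hat a\ge a$), so that the $\min$ and $\max$ with $1$ yield a correct, if in edge cases not sharp, bound.

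Finally, the limit is immediate from the explicit formulas: as $\gamma\to\infty$ one has $\frac{\gamma D_\mu}{1+\gamma D_\mu}\to1$ while $\frac{C_\mu}{1+\gamma C_\mu e_\mu}\to0$, and $\frac{1+\gamma e_\mu}{\min(1,C_\mu^{-1})+\gamma e_\mu}\to1$, so $F_\mu\to1$, and likewise $f_\mu\to1$. Since $f_\mu$ and $F_\mu$ depend only on the mesh-independent constants of \eqref{eq:speceqvuiS1S2} and on $\gamma$, the resulting two-sided bound $f_\mu\hat{\mathbf S}_\gamma\le\mathbf S_\gamma\le F_\mu\hat{\mathbf S}_\gamma$ simultaneously delivers the spectral inclusion and the asserted mesh-independent spectral equivalence of $\hat{\mathbf S}_\gamma$ and $\mathbf S_\gamma$.
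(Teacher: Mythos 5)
Your proposal is correct and follows essentially the same route as the paper: both reduce the spectrum of $\hat{\mathbf S}_\gamma^{-1}\mathbf S_\gamma$ to the Rayleigh quotient of the inverse pencil $(\hat{\mathbf S}^{-1}+\gamma\mathbf W^{-1},\,\mathbf S^{-1}+\gamma\mathbf W^{-1})$, derive the first pair of bounds by splitting the quotient into two summands estimated via \eqref{eq:speceqvuiS1S2}, and the second pair by normalizing by $\mathbf q^T\hat{\mathbf S}^{-1}\mathbf q$ and exploiting the monotonicity of $x\mapsto(1+\gamma x)/(b+\gamma x)$. The only cosmetic difference is that the paper enforces this monotonicity by replacing the denominator ratio with $\max(1,c_\mu^{-1})$ up front, whereas you handle the sign of $\beta-1$ by an explicit case distinction; both yield the same $f_\mu$ and $F_\mu$.
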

\begin{proof}
Consider the generalized eigenvalue problem $\mathbf S_\gamma \mathbf p= 
\lambda \hat{\mathbf S}_\gamma \mathbf p$  and 
let $\lambda_{\min}$ and $\lambda_{\max}$ be the smallest and largest eigenvalues,
respectively.
Observing that this generalized eigenvalue equation is equivalent to
$\hat{\mathbf S}_\gamma^{-1} \mathbf q= \lambda
\mathbf S_\gamma^{-1} \mathbf q$, where $\mathbf q = \mathbf S_\gamma \mathbf p$,
we find that $\lambda_{\min}$ and $\lambda_{\max}$ can be characterized by the generalized
Rayleigh quotients
\begin{equation}
	\lambda_{\min} = \min_{\mathbf q} \frac{\mathbf q^T
	\hat{\mathbf S}_\gamma^{-1}\mathbf q}{\mathbf q^{T}\mathbf S_\gamma^{-1}\mathbf q},
	\quad
	\lambda_{\max} = \max_{\mathbf q} \frac{\mathbf q^T
	\hat{\mathbf S}_\gamma^{-1}\mathbf q}{\mathbf q^{T}\mathbf S_\gamma^{-1}\mathbf q}.
\end{equation}
We now estimate $\lambda_{\min}$ and $\lambda_{\max}$ using these
Rayleigh quotients.
\begin{align*}
	\lambda_{\min} &= 
	\min_{\mathbf q} \dfrac{\mathbf q^T\left(\hat{\mathbf S}^{-1}+\gamma \mathbf W^{-1}
	\right)\mathbf q}{\mathbf q^T \left(\mathbf S^{-1}+\gamma \mathbf W^{-1}\right)
	\mathbf q}\\
	&\geq \min_{\mathbf q} 
	\dfrac{\mathbf q^T\hat{\mathbf S}^{-1}\mathbf q}
	{\mathbf q^T \left(\mathbf S^{-1}+\gamma \mathbf W^{-1}\right)\mathbf q} +
	\gamma\min_{\mathbf q} 
	\dfrac{\mathbf q^T\mathbf W^{-1}\mathbf q}
	{\mathbf q^T \left(\mathbf S^{-1}+\gamma \mathbf W^{-1}\right)\mathbf q}\\
                   &\geq \min_{\mathbf q}\dfrac{\mathbf q^T \hat{\mathbf S}^{-1}\mathbf q}{\mathbf q^T 
	\left(\frac{1}{c_\mu} \hat{\mathbf S}^{-1}+\gamma E_\mu \hat{\mathbf S}^{-1}\right)\mathbf q} 
    + \gamma \min_{\mathbf q} \dfrac{\mathbf q^T \mathbf W^{-1}\mathbf q}
	  {\mathbf q^T \left(\frac{1}{d_\mu}\mathbf W^{-1}+\gamma
	\mathbf W^{-1}\right)\mathbf q}\\
	&\geq \dfrac{1}{\frac{1}{c_\mu} + \gamma E_\mu} + \frac{\gamma d_\mu}{1 +
	\gamma d_\mu} = \frac{c_\mu}{1 + \gamma c_\mu E_\mu} + 
	\frac{\gamma d_\mu}{1+\gamma d_\mu}\label{eq:lambdamin},
\end{align*}
where \eqref{eq:speceqvuiS1S2} has been used in the first two inequalities.
Another estimation for $\lambda_{\min}$ is as follows
\begin{align*}
	\lambda_{\min} &=
	\min_{\mathbf q} \dfrac{1 + \gamma \dfrac{\mathbf q^T\mathbf W^{-1}\mathbf q}
	{\mathbf q^T\hat{\mathbf S}^{-1}\mathbf q}}{\dfrac{\mathbf q^T\mathbf S^{-1}\mathbf q}
	{\mathbf q^T\hat{\mathbf S}^{-1}\mathbf q} +\gamma
	\dfrac{\mathbf q^T\mathbf W^{-1}\mathbf q}
	{\mathbf q^T\hat{\mathbf S}^{-1}\mathbf q}}
        \geq \min_{\mathbf q} \dfrac{1 + \gamma\dfrac{\mathbf q^T\mathbf W^{-1}\mathbf q}
	{\mathbf q^T\hat{\mathbf S}^{-1}\mathbf q}}{\max(1,c_\mu^{-1}) +
    \gamma\dfrac{\mathbf q^T\mathbf W^{-1}\mathbf q}{\mathbf q^T\hat{\mathbf S}^{-1}\mathbf q}}\\
	&= \dfrac{1+\gamma \min_{\mathbf q}{\dfrac{\mathbf q^T\mathbf W^{-1}\mathbf q}
	{\mathbf q^T\hat{\mathbf S}^{-1}\mathbf q}}}{\max(1,c_\mu^{-1})+\gamma\min_{\mathbf q}
	{\dfrac{\mathbf q^T\mathbf W^{-1}\mathbf q}
	{\mathbf q^T\hat{\mathbf S}^{-1}\mathbf q}}}\quad
	(\text{since }x\mapsto\dfrac{1+\gamma x}{b+\gamma x} \text{ is increasing if } b\geq
	1)\\
	&\geq \dfrac{1 + \gamma e_\mu}{\max(1,c_\mu^{-1}) + \gamma e_\mu},
\end{align*}
where the first and the last inequality again use \eqref{eq:speceqvuiS1S2}.
Combining the above two estimates of $\lambda_{\min}$, we obtain that
$\lambda_{\min}\ge f_{\mu}$ with $f_{\mu}$ as defined in \eqref{eq:f_mu}.
Using similar arguments for $\lambda_{\max}$, one shows that
$\lambda_{\max}\le F_{\mu}$ with $F_{\mu}$ as defined in
\eqref{eq:F_mu}. Finally, it is easy to verify that $f_\mu, F_\mu\to
1$ as $\gamma\to\infty$, which ends the proof.
\end{proof}
\begin{remark}
For the case of $\hat{\mathbf S}=\mathbf W$, 
the eigenvalues of the generalized eigenvalue problem 
$\mathbf S_\gamma \mathbf x = \lambda \hat{\mathbf S}_\gamma\mathbf x$
are 
$$
	\lambda = \frac{1+\gamma}{\nu^{-1}+\gamma}, 
$$
where $\nu$ are the eigenvalues of the generalized eigenvalue problem
$\mathbf S\mathbf y = \nu \hat{\mathbf S} \mathbf y$, 
\cite[Section 2]{BenziOlshanskii06}. Our estimates reduce to the
same result assuming $c_\mu = \nu_{\min}$ and $C_\mu = \nu_{\max}$ since
$\hat{\mathbf S}=\mathbf W$ implies that
$e_\mu=E_\mu=1$, $c_\mu=d_\mu$, $C_\mu=D_\mu$, and hence
\begin{equation*}
	f_\mu = 
	\max\left(\frac{1+\gamma}{c_\mu^{-1}+\gamma}, 
	\frac{1+\gamma}{\max(1,c_\mu^{-1})+\gamma}\right) = 
	\frac{1+\gamma}{\nu_{\min}^{-1}+\gamma},\quad F_\mu = \frac{1+\gamma}{\nu_{\max}^{-1}+\gamma}.
\end{equation*}
\end{remark}
\begin{remark}
If one uses the block preconditioner $\mathbf P$ in \eqref{eq:precond} for the
augmented variable viscosity Stokes system \eqref{eq:augstokessys} and 
inverts $\mathbf A_\gamma$ exactly, i.e.,\ 
$\hat{\mathbf A}_\gamma^{-1} = \mathbf A_\gamma^{-1}$, a simple calculation
shows that
\begin{equation}
\mathbf P
\begin{bmatrix}
	\mathbf A_\gamma &\mathbf B^T\\
	\mathbf B&\mathbf 0
\end{bmatrix}=
\begin{bmatrix}
	\mathbf I_n & \mathbf A^{-1}\mathbf B^{T}(\mathbf I_m -
	\hat{\mathbf S}_\gamma^{-1}\mathbf S_\gamma)\\
	\mathbf{0} &  \hat{\mathbf S}_\gamma^{-1}\mathbf S_\gamma 
\end{bmatrix}.
\end{equation}
Hence, the condition number of the preconditioned system can be bounded in
terms of $f_\mu$ and $F_\mu$,
\begin{equation}
	\text{cond}\left(\mathbf P
\begin{bmatrix}
	\mathbf A_\gamma &\mathbf B^T\\
	\mathbf B&\mathbf 0
\end{bmatrix}
	\right)\leq \frac{\max(1, F_\mu)}{\min(1, f_\mu)}.	
\end{equation}
\end{remark}

Since the pressure mass matrix $\mathbf M_p$ and
the weighted pressure mass matrix $\mathbf M_p(1/\mu)$ are
spectrally equivalent to the Schur complement $\mathbf
S$, \cref{lemma:spectrumest} suggests two natural choices for
$(\hat{\mathbf{S}}, \mathbf W)$, namely  $(\mathbf
M_p(1/\mu),\mathbf
M_p)$ and $(\mathbf
M_p(1/\mu),\mathbf
M_p(1/\mu))$.
We call the resulting block preconditioners \eqref{eq:precond} AL
preconditioners $\mathbf P_1$ and $\mathbf P_2$:
\begin{equation}\label{eq:P1P2}
	\mathbf P_1: \hat{\mathbf S}_\gamma^{-1} = \mathbf M_p(1/\mu)^{-1}+\gamma 
	\mathbf M_p^{-1}\quad \text{and}\quad
	\mathbf P_2: \hat{\mathbf S}_\gamma^{-1} = \left(1+\gamma\right) \mathbf M_p(1/\mu)^{-1}.
\end{equation}

These two preconditioners are examined in \cref{table:exacttopleft} 
using the two-dimensional multi-sinker test problem detailed in \cref{sec:sinker}.
To exclusively study the Schur complement approximation, we use an
exact solve of $\mathbf A_{\gamma}$ in these experiments.
We find that the iteration counts decrease as
$\gamma$ increases for both preconditioners for all dynamic 
ratios, i.e., all viscosity contrasts. This numerically illustrates the results from
\cref{lemma:spectrumest}, i.e., that the Schur complement approximation
improves as $\gamma\to\infty$.

\begin{table}[ht]
	\label{table:exacttopleft}
	\centering
	\caption{Comparisons of AL preconditioners
          $\mathbf P_1$ and $\mathbf P_2$ defined in \eqref{eq:P1P2}
          for different $\gamma$ and different viscosity contrasts $\text{DR}(\mu)$.
           Shown are the number of FGMRES
           iterations to achieve $10^{6}$ residual reduction.
           The (1,1)-block of the system
          is solved exactly in each iteration using an LU
          factorization. The total number of velocity and pressure
          degrees of freedom is	394,754.}
	\begin{tabular}{lrrrrrrrrr}
		\toprule
		&\multicolumn{4}{c}{$\mathbf{P}_1$} && 
		\multicolumn{4}{c}{$\mathbf{P}_2$}\\[0.2cm]\
		$\gamma$  $\backslash \text{DR}(\mu)$ 
		& $10^4$ & $10^6$ & $10^8$ & $10^{10}$ 
		&& $10^4$ & $10^6$ & $10^8$ & $10^{10}$
		\\\midrule
		0    & 32 & 48 & 59 & 70   && 32 & 48 & 59 & 70\\
		10   & 7  &  9 & 10 & 13   && 10 & 16 & 20 & 24\\
		1000 & 2  &  3 &  4 &  5   &&  2 &  4 &  5 &  6\\
		\bottomrule
	\end{tabular}
\end{table}
 
\section{Robust multigrid for the (1,1)-block}
\label{sec:multigrid}
While adding the term $\mathbf B^T\mathbf W^{-1}\mathbf B$ makes it easier to approximate the Schur complement of 
the augmented system \eqref{eq:augstokessys}, 
inverting the resulting (1,1)-block becomes harder due to the 
large nullspace of the 
discrete divergence operator $\mathbf B$.
These difficulties can be seen in the numerical experiments in 
\cref{fig:topleftsolvefail}, where 
we study the convergence of
classical geometric and algebraic multigrid methods 
for inverting $\mathbf{A}_{\gamma}$, taken
from the two-dimensional multi-sinker test problem (see \cref{sec:sinker} for description)
with $\mathbf W=\mathbf M_p$.
We observe that standard geometric multigrid (GMG) schemes
with a Jacobi smoother fail to converge within $300$ iterations 
for $\gamma=10$.
Using algebraic multigrid (AMG) presents an improvement but the
number of iterations still increases significantly with $\gamma$. AMG
converges for moderate dynamic ratios of $\text{DR}(\mu)=10^4, 10^6$
for $\gamma=10$, but
fails to converge for larger dynamic ratios or $\gamma$. We tested
several AMG parameters and coarsening strategies but were not able to
improve these results. This is due to the
near-singularity of the operator, making it challenging to find
appropriate AMG parameters that lead to a good level hierarchy with low
operator complexity.

\begin{table}[ht]
	\label{fig:topleftsolvefail}
	\centering
	\caption{Number of FGMRES iterations preconditioned by an
          F-cycle of 
	geometric multigrid (GMG) and W-cycle of algebraic multigrid (AMG) for
	solving the augmented (1,1)-block  with
		$\mathbf W=\mathbf M_p$. The discretization is based on  $[\mathbb{Q}_3]^2\times
	\mathbb{P}_2^{\text{disc}}$ elements on a quadrilateral mesh
        with 296,450 unknowns.
        Shown is the number of FGMRES iterations to achieve $10^{6}$ residual reduction.  
	For GMG, we use 4 mesh levels with 5 Jacobi pre/post-smoothing
	steps on each level.
	For AMG, 5 W-cycles are applied per FGMRES iteration. The AMG hierarchy uses 8
	($\gamma=0$) and 9 ($\gamma=10$) mesh levels with 5 SSOR
        pre/post smoothing steps. Both methods use a direct solve on
        the coarse level.
	``-'' indicates failure of the solver to converge in 300 iterations.}
	\begin{tabular}{lccccccccc}
		\toprule
		&\multicolumn{4}{c}{Standard GMG} && 
		\multicolumn{4}{c}{BoomerAMG}\\[0.2cm]\
		$\gamma$  $\backslash \text{DR}(\mu)$ & $10^4$ & $10^6$ & $10^8$ & $10^{10}$ 
		&& $10^4$ & $10^6$ & $10^8$ & $10^{10}$ \\\midrule
		0    & 7 & 12 & 14 & 15 && 14 & 17  & 19 & 18 \\
		10   & - & - & - & -    && 34 & 123 & - & -\\
		\bottomrule
\end{tabular}
\end{table}

To address these difficulties, we use a multigrid scheme with
customized, $\gamma$-robust smoothing and transfer operators.
The design of the smoother and the prolongation operator is based
on a local characterization of the nullspace of the augmented term, i.e.,~the
space of discretely divergence-free functions. While a general framework
for robust multigrid was introduced by Sch\"oberl in~\cite{Schoeberl99},
establishing that the conditions for this framework are met is a
technical and highly element-specific task.

In~\cite{Schoeberl99}, robustness is proven for the
$[\mathbb{P}_2]^2\times \mathbb P_{0}$ element.
By adding bubble functions to the velocity space, this
result is extended to three dimensions for the $[\mathbb{P}_1\oplus
  B_3^F]^3\times 
\mathbb P_{0}$ element in~\cite{FarrellMitchellWechsung19}.
Higher-order discretizations (with non-constant pressure) were considered
in~\cite{FarrellMitchellScottWechsung20b}, where robustness is proven on specific
meshes for the Scott-Vogelius $[\mathbb{P}_k]^d\times
\mathbb{P}_{k-1}^\mathrm{disc}$ element.
While Scott-Vogelius elements enable
exact enforcement of the divergence constraint, the scheme
in~\cite{FarrellMitchellScottWechsung20b} requires 
barycentrically refined meshes at every level, and uses a block Jacobi smoother with
rather large block sizes. The latter amounts to a significant computational
effort, particularly in three dimensions.
Another class of discretizations that enforce the divergence constraint exactly are those building on $H(\mathrm{div})$ conforming elements.
It was shown in~\cite{arnold_1997, arnold_2000} that block Jacobi smoothers yield parameter robust multigrid methods in $H(\mathrm{div})$.
Using the same smoother and the local Discontinuous Galerkin formulation of~\cite{cockburn_2006},
in~\cite{HongKrausXuZikatanov16} a full multigrid convergence analysis is carried
out for nearly incompressible elasticity and the Stokes equations (with
constant viscosity). An advantage of working in these spaces is that no custom prolongation is necessary.

Unlike the existing work, here we consider quadrilateral and
hexahedral meshes.
Popular element choices on such meshes for the Stokes and Navier-Stokes equations are
the $[\mathbb{Q}_k]^d\times\mathbb{P}_{k-1}^{\text{disc}}$ and
$[\mathbb{Q}_k]^d\times\mathbb{Q}_{k-2}^{\text{disc}}$, $k\geq 2$, $d=2,3$ element
pairs. Here, we focus on the former case, but we remark that in numerical
experiments we also observed robust
performance of the same multigrid scheme for the latter element. 
We will construct smoothing and prolongation operators
and prove their robustness. This enables robust solution of high-order
discretized problem without similar mesh
limitations as required for Scott-Vogelius elements.

Before going into details of the smoother and the transfer operator construction
in \cref{fig:topleftsolve} we show
convergence results for the (1,1)-block obtained with the resulting multigrid scheme for the same
problem as in \cref{fig:topleftsolvefail}.
We can see that the multigrid scheme is able to
maintain similar convergence rates for $\gamma$ ranging from $0$ to 
$1000$ and for dynamic ratios $\text{DR}(\mu)$ up to $10^{10}$.

\begin{table}[ht]
	\label{fig:topleftsolve}
	\centering
	\caption{Number of FGMRES iterations preconditioned by F-cycle multigrid with
	the customized smoother and the customized prolongation operator for
	solving the (1,1)-block of the Stokes system with
	the $[\mathbb{Q}_3]^2\times\mathbb{P}_2^{\text{disc}}$ element on a quadrilateral
	mesh. 
	See \cref{fig:topleftsolvefail} for the description of the
        mesh and the solver setup.}
	\begin{tabular}{lrrrrrrrrr}
		\toprule
		&\multicolumn{4}{c}{$\mathbf W=\mathbf M_p$} && 
		\multicolumn{4}{c}{$\mathbf W=\mathbf M_p(1/\mu)$}\\[0.2cm]\
		$\gamma$  $\backslash \text{DR}(\mu)$ & $10^4$ & $10^6$ & $10^8$ & $10^{10}$ 
		&& $10^4$ & $10^6$ & $10^8$ & $10^{10}$ \\\midrule
		&\multicolumn{9}{c}{Robust smoother \& robust transfer}\\\midrule
		0    & 7 & 10 & 13 & 14 && 7 & 10 & 13 & 14\\
		10   & 6 & 12 & 14 & 14 && 6 &  9 & 11 & 11\\
		1000 & 7 & 14 & 17 & 17 && 7 & 12 & 14 & 14\\
		\bottomrule
\end{tabular}
\end{table}

For the analysis in the remainder of this section, we
restrict ourselves to 
$\mathbf{W}=\mathbf{M}_p$, i.e., the AL term $\mathbf B^T \mathbf
W^{-1}\mathbf B$ is the discrete form of 
$
\left(\Pi_{Q_h}\left(\divergence\vel\right),\Pi_{Q_h}\left(\divergence\veltest\right)\right),
$
where $\Pi_{Q_h}$ is the $L^2$-projection operator defined in \cref{sec:not}. 
We consider a shape regular mesh $\mathcal{T}_h$, defined in 
\cite{HeuvelineSchieweck07},  
with $\cup_{K\in \mathcal{T}_h} K = \bar{\Omega}$ 
in which
$(K_1)^o \cap (K_2)^o =
\emptyset$ for distinct elements $K_1,~K_2\in \mathcal{T}_h$. 
We denote by $h$ the mesh size of $\mathcal{T}_h$, defined as the largest 
diameter of any element $K\in \mathcal{T}_h$. 
To differentiate
the fine and coarse mesh operators $\mathbf A_{h,\gamma}$ and $\mathbf
A_{H,\gamma}$, respectively, we add subscripts $h$ or $H$.
We denote by $V_h^k$ and $Q_h^{k-1}$ the finite element spaces with
$[\mathbb{Q}_k]^d\times\mathbb{P}_{k-1}^{\text{disc}}$, $k\geq 2$
elements, i.e.,
\begin{align}
	&V_h^k := \left(\{\phi \in H^1_0(\Omega):\phi\vert_K \circ F_k\in \mathbb{Q}_k(\hat{K})\:\:\forall K\in \mathcal{T}_h\}\right)^d\\
    &Q_h^{k-1} := \{\phi \in L^2_0(\Omega):\phi\vert_K \circ F_k\in \mathbb{P}_{k-1}(\hat{K})\:\:\forall K\in \mathcal{T}_h\},
\end{align}
with $F_K:\hat{K}\rightarrow K$ being the mapping between the reference element 
$\hat{K}$ and $K$.

\subsection{Smoothing}
Many commonly used smoothers can be expressed as subspace correction methods. 
Here, we consider parallel subspace correction (PSC) methods, 
i.e., the residual correction on each subspace can be done in parallel.
Let $V_i$ be a decomposition of $V_h$, $V_h =\sum_i V_i$. One
PSC iteration smoothing step for a residual $\mathbf{r}^k_h$ is of the form 
\begin{equation*}
    \mathbf{u}^{k+1}_h = \mathbf{u}^k_h + \tau \mathbf{D}_{h,\gamma}^{-1}\mathbf{r}^k_h, \quad \text{where} \quad
	\mathbf D_{h,\gamma}^{-1} = \sum_i \mathbf I_i \mathbf A_i^{-1}\mathbf I_i^*
\end{equation*}
and $\mathbf I_i:V_h\rightarrow V_i$ is the natural inclusion,
$\mathbf A_i$ is the restriction of $\mathbf A_{h,\gamma}$ to 
subspace $V_i$ as
$(\mathbf A_i\boldvar u_i, \boldvar v_i):= (\mathbf A_{h,\gamma} \mathbf I_i
\boldvar u_i,\mathbf I_i\boldvar v_i)$,
and $\tau>0$ is a damping parameter. 

A key condition for a PSC smoother to be parameter-robust, 
i.e., the operator $\mathbf{D}_{h,\gamma}$
being spectrally equivalent to $\mathbf{A}_{h,\gamma}$ with constants independent
of $\gamma$,
is that the subspaces $V_i$ satisfy the \emph{kernel decomposition
property},~\cite[Theorem~4.1]{Schoeberl99}:
\begin{equation}\label{eq:kdp}
	\mathcal{N}_h = \sum_i (V_i \cap \mathcal{N}_h),
\end{equation}
where $\mathcal{N}_h:=\{\vel_h\in V_h: \Pi_{Q_h}\left(\divergence \vel_h\right)=0\}$ 
is the space of discretely divergence-free vector fields. 
Subspace decompositions $V_i$ satisfying this property have been
found on triangular and tetrahedral meshes for 
$[\mathbb{P}_2]^2\times \mathbb P_{0}$, 
$[\mathbb{P}_1\oplus B_3^F]^3\times \mathbb P_{0}$ 
and Scott-Vogelius discretizations.
In the latter case, the kernel is decomposed relying on the fact
that $\divergence V_h \subseteq Q_h$, which implies that discretely divergence-free
fields are also continuously divergence-free.  This is however not commonly
true for
other discretizations. 
For example,
for a $[\mathbb{P}_2]^2\times \mathbb P_{0}$ discretization, 
we can easily construct a field that has non-zero divergence but 
with divergence that integrates to zero on $K$ for all $K\in\mathcal{T}_h$, i.e., this
field is discretely but not continuously divergence-free.
The remedy for the $[\mathbb{P}_2]^2\times \mathbb P_{0}$ discretization
is to modify a discretely divergence-free field $\vel_0$ in the
\emph{interior} of each mesh element to obtain a continuously divergence-free
field $\vel_0+\boldvar\omega$, \cite{Schoeberl99}; 
and in addition that the modification $\boldvar\omega$ does
\emph{not} change the interpolated field $I_h(\vel_0)$, i.e.,
$I_h(\vel_0+\boldvar\omega)=I_h(\vel_0)$, 
where $I_h:(H^1_0(\Omega))^d\rightarrow V_h$ is a certain Fortin operator used in the
construction of the space decomposition $V_i$. 

For the higher order discretizations $[\mathbb Q_k]^d\times \mathbb
P_{k-1}^{\text{disc}}$, $k\geq 2$,
discretely divergence-free fields are not continuously divergence-free
in general either. We will use a modification similar to the one above
to make a discretely divergence-free field $\vel_0$ also continuously
divergence-free (see \cref{lemma:divfree}).
However, the modification $\boldvar\omega$ does not interpolate to $\boldvar 0$ with the
corresponding Fortin operator $I_h: (H^1_0(\Omega))^d\rightarrow V_h^k$ (as
in \cref{lemma:Ih}). Instead, we find that 
the modification is small: its $H^1$-norm is bounded above by the $H^1$-norm
of the original field $\vel_0$ up to a mesh-independent constant, i.e.,
\begin{equation*}
\|\boldvar\omega\|_1\preceq \|\vel_0\|_1.
\end{equation*}

The above observation motivates~\cref{prop:conDAequiv}, 
which provides a way to construct subspaces $V_i$ 
satisfying the kernel decomposition property~\eqref{eq:kdp}
for pairs $V_h\times Q_h$
for which $\divergence V_h \nsubseteq Q_h$.
In \cref{prop:conDAequiv}, we additionally
verify the stability of the
space decomposition, which implies
the $\gamma$-independent spectral equivalence of
$\mathbf{D}_{h,\gamma}$ following
\cite[Proposition 2.1]{FarrellMitchellScottWechsung20b}.
The proposition is presented in terms of a generic finite element
space pair $V_h\times Q_h$.
It holds under the assumptions summarized next.
\begin{assumption}\label{assump1}
  We make the following assumptions on the domain and the
  finite element discretization.  
\begin{enumerate}
	\item[(1)] $\Omega$ is a star-like domain with respect to some ball.
	\item[(2)] \{$\Omega_i$\} is an open covering of $\Omega$ such that
		for any mesh element $K\in \mathcal{T}_h$, $K^o \subset \Omega_i$
		if $\Omega_i \cap K\neq\emptyset$. 
	\item[(3)] $\{\rho_i\}$ 
		is a smooth partition of unity associated with \{$\Omega_i$\}
		satisfying
		$\|\rho_i\|_{L^{\infty}} \leq 1$, 
		$\|\rho_i\|_{W^{1, \infty}} \preceq h^{-1}$, 
		$\|\rho_i\|_{W^{2, \infty}} \preceq h^{-2}$ and  
		$\operatorname{supp}(\rho_i)\subset \Omega_i$.
	\item[(4)] $I_h: (H^1_0(\Omega)^d\rightarrow V_h$ is a Fortin operator, i.e., 
		$I_h$ is linear and continuous, 
		$I_h(\veltest_h) = \veltest_h$ for $\veltest_h\in V_h$, and 
		$(q_h, \divergence I_h(\veltest)) = (q_h, \divergence\veltest)$ for
		all $q_h\in Q_h$ and $\veltest \in V$.
	\item[(5)] For every $\vel_0\in \mathcal{N}_h$, there exist $\boldvar\omega \in
		V_{\text{loc}}$, 
		\begin{equation}
		\label{eq:vloc}
			V_{\text{loc}}:= \{\veltest\in \left(H_0^1(\Omega)\right)^d: \veltest=\boldvar 0 \text{
				on edges of element } K\:\:\forall K \in \mathcal{T}_h\},
		\end{equation}
		such that 
		$\divergence \left(\vel_0+\boldvar\omega\right)=0$,
		$\|\boldvar\omega\|_1\preceq\|\vel_0\|_1$ and 
		$\boldvar\omega_h=I_h(\boldvar\omega) \in V_{\text{loc}}$. 
\end{enumerate}
\end{assumption}
\begin{remark}
Note that 
(1), (3) and (4)
in \cref{assump1} are the same as in 
\cite[Proposition 2.2]{FarrellMitchellScottWechsung20b}.
However, assumptions (2) and (5) differ. In particular, the assumptions on
the open covering $\Omega_i$ are stricter and we assume the
existence of $\boldvar \omega$ in (5). These stricter
assumptions are needed to prove the existence of a splitting of
$\boldvar \omega_h$, which is then combined with the splitting of a
continuously divergence-free field to construct a splitting of a
discretely divergence-free field, as needed
to generalize the result from \cite{FarrellMitchellScottWechsung20b}
to settings where $\mathcal{N}_h\not=\mathcal{N}$. 
\end{remark}

\begin{proposition}
\label{prop:conDAequiv}
Under the conditions of \cref{assump1}, the space decomposition $\{V_i\}$ with 
	$		V_i:=\{I_h(\veltest):\veltest\in (H^1_0(\Omega))^d,\enskip\operatorname{supp}(\veltest)\subset
	\Omega_i\}$
satisfies
\begin{equation}
	\label{eq:con1}
	\inf_{\substack{\vel_h=\sum \vel_i\\ \vel_i\in V_i}}
	\sum_i \|\vel_i\|_1^2 \preceq h^{-2} \|\vel_h\|_0^2.
\end{equation}
Moreover, this decomposition satisfies the kernel decomposition
property
\eqref{eq:kdp}, and for any $\vel_0\in\mathcal{N}_h$ holds
\begin{equation}
	\label{eq:con2}
	\inf_{\substack{\vel_0=\sum \vel_{0,i}\\ \vel_{0,i}\in V_i\cap
        \mathcal N_h}}
	\sum_i \|\vel_{0,i}\|_1^2 \preceq h^{-4} \|\vel_{0}\|_0^2.
\end{equation}
\end{proposition}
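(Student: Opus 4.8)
The plan is to establish the three assertions in sequence, treating the unconstrained stability bound \eqref{eq:con1} as a warm-up for the kernel estimate \eqref{eq:con2}. For \eqref{eq:con1}, given $\vel_h\in V_h$ I would localize by the partition of unity and set $\vel_i := I_h(\rho_i\vel_h)$. Since $\sum_i\rho_i\equiv 1$ and $I_h$ reproduces finite element functions (assumption (4)), linearity gives $\sum_i\vel_i = I_h(\vel_h)=\vel_h$, while $\operatorname{supp}(\rho_i\vel_h)\subset\Omega_i$ ensures $\vel_i\in V_i$. The estimate then follows from the mesh-independent $H^1$-continuity of $I_h$, the product rule together with $\|\rho_i\|_{W^{1,\infty}}\preceq h^{-1}$ (assumption (3)), the inverse inequality $|\vel_h|_{1,\Omega_i}\preceq h^{-1}\|\vel_h\|_{0,\Omega_i}$, and the bounded overlap of $\{\Omega_i\}$, yielding $\|\vel_i\|_1\preceq h^{-1}\|\vel_h\|_{0,\Omega_i}$ and hence $\sum_i\|\vel_i\|_1^2\preceq h^{-2}\|\vel_h\|_0^2$.

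The core of the proof is the kernel decomposition, and the key device is to split an arbitrary $\vel_0\in\mathcal N_h$ through its continuously divergence-free relative. Given $\vel_0$, assumption (5) supplies $\boldvar\omega\in V_{\text{loc}}$ with $\divergence(\vel_0+\boldvar\omega)=0$, $\|\boldvar\omega\|_1\preceq\|\vel_0\|_1$, and $\boldvar\omega_h:=I_h(\boldvar\omega)\in V_{\text{loc}}$. Writing $\vel:=\vel_0+\boldvar\omega$ and using that $I_h$ reproduces $\vel_0\in V_h$, we obtain the identity $I_h(\vel)=\vel_0+\boldvar\omega_h$, i.e.\ $\vel_0=I_h(\vel)-\boldvar\omega_h$, which I would decompose termwise into pieces in $V_i\cap\mathcal N_h$. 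For the first term I would invoke the standard decomposition of a continuously divergence-free $H^1_0$-field, localized by $\{\rho_i\}$ and made divergence-free on each subdomain by a right inverse of the divergence (this is where the star-shapedness of assumption (1) enters), producing $\vel=\sum_i\vel_i$ with $\divergence\vel_i=0$, $\operatorname{supp}\vel_i\subset\Omega_i$, and $\sum_i\|\vel_i\|_1^2\preceq h^{-2}\|\vel\|_0^2+|\vel|_1^2$. Applying $I_h$ and the Fortin property, $\Pi_{Q_h}(\divergence I_h(\vel_i))=\Pi_{Q_h}(\divergence\vel_i)=0$, so $I_h(\vel_i)\in V_i\cap\mathcal N_h$ and $\sum_i I_h(\vel_i)=I_h(\vel)$.

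For the correction $\boldvar\omega_h\in V_h\cap V_{\text{loc}}$, I would exploit that it vanishes on all element edges, so it splits with disjoint supports as $\boldvar\omega_h=\sum_K\boldvar\omega_h^K$ over elements $K$. Because the pressure space $\mathbb P_{k-1}^{\text{disc}}$ is discontinuous, the discrete divergence-free condition $\Pi_{Q_h}(\divergence\boldvar\omega_h)=0$ decouples element by element, so each $\boldvar\omega_h^K$ is itself discretely divergence-free; and by assumption (2) each $K^o$ is contained in a single $\Omega_i$, whence $\boldvar\omega_h^K\in V_i\cap\mathcal N_h$. Grouping these into $\boldvar\omega_{h,i}$ gives, by disjointness of supports, $\sum_i\|\boldvar\omega_{h,i}\|_1^2=\|\boldvar\omega_h\|_1^2\preceq\|\boldvar\omega\|_1^2\preceq\|\vel_0\|_1^2$. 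Setting $\vel_{0,i}:=I_h(\vel_i)-\boldvar\omega_{h,i}\in V_i\cap\mathcal N_h$, we obtain $\sum_i\vel_{0,i}=\vel_0$; since $\vel_0\in\mathcal N_h$ was arbitrary this proves \eqref{eq:kdp}. The $h^{-4}$ bound then follows by substituting $\|\vel\|_0\preceq h^{-1}\|\vel_0\|_0$ and $|\vel|_1\le|\vel_0|_1+|\boldvar\omega|_1\preceq\|\vel_0\|_1\preceq h^{-1}\|\vel_0\|_0$ (using $\|\boldvar\omega\|_1\preceq\|\vel_0\|_1$ and the inverse inequality for the finite element function $\vel_0$) into the continuous bound, giving $\sum_i\|I_h(\vel_i)\|_1^2\preceq h^{-4}\|\vel_0\|_0^2$, which dominates the $h^{-2}\|\vel_0\|_0^2$ contribution of the $\boldvar\omega_{h,i}$.

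The step I expect to be the main obstacle is the continuously divergence-free decomposition with mesh-robust constants, together with the careful bookkeeping of the powers of $h$. The two factors of $h^{-2}$ in \eqref{eq:con2} come from genuinely distinct sources: one from differentiating the partition of unity (and the attendant local divergence correction), and one from converting $\|\vel\|_0$ into $\|\vel_0\|_0$ through the inverse inequality, which is only possible because assumption (5) controls $\boldvar\omega$ by $\|\vel_0\|_1$. Verifying that the local right inverse of the divergence on each $\Omega_i$ has shape-robust, $h$-explicit bounds (so that all constants remain mesh-independent under $\preceq$), and confirming that every constructed piece lands in $V_i\cap\mathcal N_h$ rather than merely in $V_i$, are the delicate points of the argument.
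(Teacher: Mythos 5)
Your proof of \eqref{eq:con1}, your element-wise splitting of $\boldsymbol\omega_h$ (using that $V_{\text{loc}}$-functions decouple across elements and that the discontinuous pressure makes the discrete divergence constraint local), and the overall architecture $\vel_0=(\vel_0+\boldsymbol\omega_h)-\boldsymbol\omega_h$ all coincide with the paper's proof. The genuine gap is in the step you yourself flag as the main obstacle: decomposing the continuously divergence-free field $\vel:=\vel_0+\boldsymbol\omega$ into locally supported divergence-free pieces that \emph{sum to $\vel$}. You localize the field as $\rho_i\vel$ and correct each piece by a local right inverse $B_i$ of the divergence on $\Omega_i$, i.e.\ $\vel_i=\rho_i\vel-B_i(\divergence(\rho_i\vel))=\rho_i\vel-B_i(\nabla\rho_i\cdot\vel)$. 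Each such $\vel_i$ is divergence-free and supported in $\Omega_i$, but the corrections do not cancel: although $\sum_i\nabla\rho_i\cdot\vel=0$, the operators $B_i$ act on different domains, so $\sum_i B_i(\nabla\rho_i\cdot\vel)\neq 0$ in general and hence $\sum_i\vel_i\neq\vel$. Repairing this (e.g.\ by redistributing residual divergences across overlaps) is precisely the hard part and is not argued; in addition, the uniformity of the local Bogovskii constants over the $O(h^{-d})$ patches is asserted rather than derived from the stated hypotheses (assumption (1) concerns star-likeness of $\Omega$, not of the patches $\Omega_i$).

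The paper circumvents this entirely by localizing a \emph{potential} rather than the field: using the $H^2$ vector-potential results of \cite{Rannacher00} in 2D and \cite{CostabelMcIntosh10} in 3D (this is where assumption (1) actually enters), it takes $\boldsymbol\Phi\in H^2$ with $\nabla\times\boldsymbol\Phi=\vel_0+\boldsymbol\omega$, $\|\boldsymbol\Phi\|_2\preceq\|\vel_0+\boldsymbol\omega\|_1$ and $\|\boldsymbol\Phi\|_1\preceq\|\vel_0+\boldsymbol\omega\|_0$, and sets $\boldvar v_{i,0}:=I_h(\nabla\times(\rho_i\boldsymbol\Phi))$. These pieces are automatically in $V_i\cap\mathcal N_h$ (a curl is exactly divergence-free, and the Fortin property transfers this to the discrete kernel), and they sum to $I_h(\nabla\times\boldsymbol\Phi)=\vel_0+\boldsymbol\omega_h$ simply because $\sum_i\rho_i\boldsymbol\Phi=\boldsymbol\Phi$. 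The price is one extra derivative: the estimate involves $\|\boldsymbol\Phi\|_{H^2}$ and the $W^{2,\infty}$ bound on $\rho_i$ from assumption (3) --- a bound your route never invokes, which is a sign that the intended mechanism is the potential one. If you replace your Bogovskii step by this stream-function localization, the remainder of your argument, including your (in fact slightly more careful) bookkeeping of the powers of $h$, goes through.
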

\begin{proof}
	To prove \eqref{eq:con1}, for $\vel_h\in V_h$ we define 
$\vel_i := I_h(\rho_i \vel_h)\in V_i$, which implies that
$\sum_i \vel_i = I_h\left(\sum_i \rho_i \vel_h\right) = I_h(\vel_h) = \vel_h$
and 
\begin{equation}
  \begin{aligned}
    \|\vel_i\|_{H^1(\Omega_i)}^2&\preceq\|\rho_i\vel_h\|_{H^1(\Omega_i)}^2\\
    &\leq \|\vel_h\|_{L^2(\Omega_i)}^2\|\nabla\rho_i\|_{L^{\infty}(\Omega_i)}^2 + 
    \|\vel_h\|_{H^1(\Omega_i)}^2\|\rho_i\|_{L^{\infty}(\Omega_i)}^2\\
    &\preceq h^{-2}\|\vel_h\|_{L^2(\Omega_i)}^2.
  \end{aligned}
\end{equation}
To show \eqref{eq:con2},  
let $\vel_0\in\mathcal{N}_h$ be some discretely divergence free velocity field. Then,
using (5) in \cref{assump1}, there exists 
$\boldvar\omega\in V_{\text{loc}}$ such that 
$\divergence\left(\vel_0+\boldvar\omega\right) = 0$.
Therefore,  
by \cite[Theorem 3.3]{Rannacher00} in 2D and 
by \cite{CostabelMcIntosh10} in 3D
there exists $\boldvar \Phi\in H^2(\Omega)$ such that 
$\nabla\times\boldvar\Phi = \vel_0+\boldvar\omega$,
$\|\boldvar\Phi\|_2 \preceq \|\vel_0+\boldvar\omega\|_1$ 
and $\|\boldvar\Phi\|_1 \preceq \|\vel_0+\boldvar\omega\|_0$.
Based on this, we use the identity $\vel_0 =
(\vel_0+I_h(\boldvar\omega))-I_h(\boldvar\omega)$ and construct
a splitting and estimates for $\vel_0+I_h(\boldvar\omega)$ and $I_h(\boldvar\omega)$
separately.

For $\vel_0+I_h(\boldvar\omega)$, 
the splitting and the estimates are obtained similarly to the 
arguments for \eqref{eq:con1}: we define
$\boldvar v_{i,0} := I_h\left(\nabla\times\left(\rho_i\boldvar \Phi\right)\right)
\in \mathcal{N}_h\cap V_i$
and observe that
	\begin{equation*}
		\begin{aligned}
		\sum_i \boldvar v_{i,0} 
			&= \sum_i I_h\left(\nabla\times\left(\rho_i\boldvar \Phi\right)\right)
			=I_h\Big(\nabla \times \Big(\sum_i \rho_i\boldvar\Phi\Big)\Big)\\
			&=I_h\left(\nabla\times\boldvar\Phi\right)=I_h\left(\vel_0+\boldvar\omega\right)
			=\vel_0 + \boldvar\omega_h
		\end{aligned}
	\end{equation*}
    and (by the same arguments as in \cite[Proposition~2.2]{FarrellMitchellScottWechsung20b})
	\begin{equation*}
			       \|\boldvar v_{i,0}\|_{H^1(\Omega)}^2
			\preceq h^{-4} \|\boldvar \Phi\|_{L^2(\Omega_i)}^2 +
			      h^{-2} \|\boldvar \Phi\|_{H^1(\Omega_i)}^2 + 
				  \|\boldvar \Phi\|_{H^2(\Omega_i)}^2.
	\end{equation*}
Summing over $i$ and denoting the maximum number of subspace overlaps
by $N_o$, we obtain
	\begin{equation}
	\label{eq:vvi}
		\begin{aligned}
			\sum_i\|\boldvar v_{i,0}\|_{H^1(\Omega)}^2
		&\preceq N_o\left(
		h^{-4} \|\boldvar \Phi\|_{L^2(\Omega)}^2 +
		h^{-2} \|\boldvar \Phi\|_{H^1(\Omega)}^2 + 
		\|\boldvar \Phi\|_{H^2(\Omega)}^2 \right)\\
		&\preceq N_o\left(
		h^{-4} \|\vel_0+\boldvar \omega\|_{L^2(\Omega)}^2 +
		h^{-2} \|\vel_0+\boldvar \omega\|_{L^2(\Omega)}^2 + 
		       \|\vel_0+\boldvar \omega\|_{H^1(\Omega)}^2 \right)\\
		&\preceq N_o h^{-4} \|\vel_0+\boldvar \omega\|_{L^2(\Omega)}^2\preceq N_o h^{-4} \|\vel_0\|_{H^1(\Omega)}^2,
		\end{aligned}
	\end{equation}
where the last inequality uses $\|\boldvar\omega\|_1\preceq \|\vel_0\|_1$.

For $I_h(\boldvar\omega) = \boldvar\omega_h$, we first assign each mesh
element $K\in\mathcal{T}_h$ an index $i_K$ such that 
$K\cap\Omega_{i_K}\neq\emptyset$
and define the set $I_i$ as the union of elements with index $i$, i.e.,
	$I_i = \bigcup_{K\in\mathcal{T}_h, i_K=i} K$.
Then, given $\boldvar \omega_h\in V_{\text{loc}}$ we define
$	\boldvar\omega_i := \chi_{I_i} \boldvar \omega_h$, 
where $\chi_{I_i}$ be the indicator function of the set $I_i$. 
By definition, $\bar{\Omega} = \bigcup_i I_i$ with pairwise disjoint
$I_i$, and hence 
$\sum_i \boldvar \omega_i = \boldvar\omega_h$. 
From (2) in \cref{assump1} and $\boldvar\omega_h\in V_{\text{loc}}$, 
$\operatorname{supp}(\boldvar\omega_i) = \bigcup_{K\in\mathcal{T}_h, i_K=i} K^o 
\subset \Omega_i$. 
Observing that 
\begin{align*}
	\Pi_{Q_h} \left(\divergence\boldvar\omega_h\right) &= 
	\Pi_{Q_h} \left(\divergence\left(\vel_0 + \boldvar\omega_h\right)\right) = 
	\Pi_{Q_h} \left(\divergence I_h\left(\vel_0 + \boldvar\omega \right)\right)\\
	&= \Pi_{Q_h} \left(\divergence \left(\vel_0 + \boldvar\omega \right)\right) = 0,
\end{align*}
we have $\boldvar\omega_h \in \mathcal{N}_h$ and since
$\operatorname{supp}(\boldvar \omega_i)=I_i$ are disjoint, 
\begin{align*}
0
=\int_{\Omega}\left(\Pi_{Q_h}\divergence \boldvar\omega_h\right)^2d\boldvar x
= \int_{\Omega}\sum_i\left(\Pi_{Q_h}\divergence \boldvar\omega_i\right)^2d\boldvar x
= \sum_i\int_{I_i}\left(\Pi_{Q_h}\divergence \boldvar\omega_i\right)^2d\boldvar x.
\end{align*}
Therefore, $\boldvar \omega_i\in V_i\cap \mathcal{N}_h$.
Using $\|\boldvar\omega\|_1\preceq\|\vel_0\|_1$, we obtain the estimate
\begin{equation}
	\label{eq:wi}
	\sum_i \|\boldvar\omega_i\|_{H^1(\Omega)}^2 = 
	\sum_{i} \|\boldvar\omega_h\|_{H^1(I_i)}^2 =
	\|\boldvar\omega_h\|_{H^1(\Omega)}^2 
	\preceq\|\vel_0\|_{H^1(\Omega)}^2.
\end{equation}
We now combine the splitting $\veltest_i$ for
$\vel_0+\boldvar\omega_h$, and $\boldvar\omega_i$ for
$\boldvar\omega_h$ defining $\vel_{0,i} := \veltest_i -
\boldvar\omega_i$. Clearly, 
$\sum_i\vel_{0,i} = \sum_i\veltest_i - \sum_i\boldvar\omega_i = \vel_0 +
\boldvar\omega_h - \boldvar\omega_h = \vel_0$ 
and from \eqref{eq:vvi}, \eqref{eq:wi}, we conclude that
\begin{equation*}
\sum_i\|\boldvar \vel_{i,0}\|_{H^1(\Omega)}^2
\preceq \left(1+N_o h^{-4}\right) \|\vel_0\|_{H^1(\Omega)}^2
\preceq h^{-4} \|\vel_0\|_{H^1(\Omega)}^2, 
\end{equation*}
which shows \eqref{eq:con2} and ends the proof.
\end{proof}

\subsubsection*{Application to $[\mathbb Q_k]^d\times \mathbb P_{k-1}^{\text{disc}}$, $k\geq 2$,
  elements}\label{sec:qkpk-1}
We now use \cref{prop:conDAequiv} 
to show that the PSC smoother with the space decomposition 
\begin{equation}
\label{eq:viim}
	V_h = \sum_{i} V_i, \quad 
	V_i = \{\veltest\in V_h: \operatorname{supp}(\veltest) \subset \operatorname{star}(v_i)\},
\end{equation}
where 
$
	\operatorname{star}(v_i):= \bigcup_{K\in \mathcal{T}_h:~v_i\in K} K
$ 
for $v_i$ being a vertex of $\mathcal{T}_h$ (see \cref{fig:starvi}) is
para\-me\-ter-robust for the discretization  $[\mathbb Q_k]^d\times \mathbb
P_{k-1}^{\text{disc}}$, $k\geq 2$. 
The proof can be summarized into three steps:
(1) construct a Fortin operator $I_h$ mapping functions in $(H^1_0(\Omega))^d$ 
to $V_h^k$;
(2) for any discretely divergence-free field $\vel_0$, 
prove the existence of $\boldvar \omega\in V_{\text{loc}}$ such that
$\vel_0+\boldvar\omega$ is continuously divergence-free, and
(3) apply \cref{prop:conDAequiv} to conclude the
$\gamma$-independent spectral equivalence of $\mathbf{D}_{h,\gamma}$
and $\mathbf{A}_{h,\gamma}$. We show steps (1) and (2) in \cref{lemma:Ih}
and \cref{lemma:divfree}, respectively.

\begin{figure}
\centering
\begin{tikzpicture}[scale=0.5]
\draw[step=2cm,thin,black] (0,0) grid (8,8);
\fill[blue, opacity=0.2] (4-1.8,4-1.8) rectangle (4+1.8,4+1.8);
\draw[->] (4+0.1,4+0.1) [out=10, in=230] to (4+0.9, 4+0.5) node[right,xshift=-0.11cm,yshift=0.11cm]{$v_i$};
\foreach \x in {0,2,4,6}
  \foreach \y in {0,2,4,6}
   {
    \draw [fill=black](\x,\y) circle (0.08cm);
	\draw [fill=black](\x+1.57,\y) circle (0.08cm);
	\draw [fill=black](\x+1-.57,\y) circle (0.08cm);
	\draw [fill=black](\x+2,\y) circle (0.08cm);
    \draw [fill=black](\x,\y+1.57) circle (0.08cm);
	\draw [fill=black](\x+1.57,\y+1.57) circle (0.08cm);
	\draw [fill=black](\x+1-.57,\y+1.57) circle (0.08cm);
	\draw [fill=black](\x+2,\y+1.57) circle (0.08cm);
    \draw [fill=black](\x,\y+1-.57) circle (0.08cm);
	\draw [fill=black](\x+1.57,\y+1-.57) circle (0.08cm);
	\draw [fill=black](\x+1-.57,\y+1-.57) circle (0.08cm);
    \draw [fill=black](\x+2,\y+1-.57) circle (0.08cm);
    \draw [fill=black](\x,\y+2) circle (0.08cm);
	\draw [fill=black](\x+1.57,\y+2) circle (0.08cm);
	\draw [fill=black](\x+1-.57,\y+2) circle (0.08cm);
    \draw [fill=black](\x+2,\y+2) circle (0.08cm);
   }
\foreach \x in {0,2,4,6}
  \foreach \y in {0,2,4,6}
   {
	   \draw [red,very thick,line cap=round] (\x,\y      -0.0) -- (\x,\y      +0.12);
	   \draw [red,very thick,line cap=round] (\x,\y+1-.57-0.12) -- (\x,\y+1-.57+0.12);
	   \draw [red,very thick,line cap=round] (\x,\y+1+.57-0.12) -- (\x,\y+1+.57+0.12);
	   \draw [red,very thick,line cap=round] (\x,\y+2    -0.12) -- (\x,\y+2    +0.0);
	   \draw [red,very thick,line cap=round] (\x+2,\y      -0.0) -- (\x+2,\y      +0.12);
	   \draw [red,very thick,line cap=round] (\x+2,\y+1-.57-0.12) -- (\x+2,\y+1-.57+0.12);
	   \draw [red,very thick,line cap=round] (\x+2,\y+1+.57-0.12) -- (\x+2,\y+1+.57+0.12);
	   \draw [red,very thick,line cap=round] (\x+2,\y+2    -0.12) -- (\x+2,\y+2    +0.0);
	   \draw [red](\x+1.57 ,\y+1-.57) circle (0.1cm);
	   \draw [red](\x+1-.57,\y+1-.57) circle (0.1cm);
	   \draw [red](\x+1.57 ,\y+1+.57) circle (0.1cm);
	   \draw [red](\x+1-.57,\y+1+.57) circle (0.1cm);
	   \draw [red,very thick,line cap=round] (\x      -0.0,\y) -- (\x      +0.12,\y);
	   \draw [red,very thick,line cap=round] (\x+1-.57-0.12,\y) -- (\x+1-.57+0.12,\y);
	   \draw [red,very thick,line cap=round] (\x+1+.57-0.12,\y) -- (\x+1+.57+0.12,\y);
	   \draw [red,very thick,line cap=round] (\x+2    -0.12,\y) -- (\x+2    +0.0,\y);
	   \draw [red,very thick,line cap=round] (\x      -0.0,\y+2) -- (\x      +0.12,\y+2);
	   \draw [red,very thick,line cap=round] (\x+1-.57-0.12,\y+2) -- (\x+1-.57+0.12,\y+2);
	   \draw [red,very thick,line cap=round] (\x+1+.57-0.12,\y+2) -- (\x+1+.57+0.12,\y+2);
	   \draw [red,very thick,line cap=round] (\x+2    -0.12,\y+2) -- (\x+2    +0.0,\y+2);
   }
\end{tikzpicture}
\hspace{1.0cm}
\begin{tikzpicture}[scale=0.5]
\draw[step=2cm, black, ultra thin] (0,0) grid (8,8);
\draw[step=4cm, black, thick] (0,0) grid (8,8);
\fill[blue,opacity=0.2] (2-1.8,2-1.8) rectangle (2+1.8,2+1.8);
\fill[blue,opacity=0.2] (2-1.8,6-1.8) rectangle (2+1.8,6+1.8);
\fill[blue,opacity=0.2] (6-1.8,2-1.8) rectangle (6+1.8,2+1.8);
\fill[blue,opacity=0.2] (6-1.8,6-1.8) rectangle (6+1.8,6+1.8);
\foreach \x in {0,2,4,6}
  \foreach \y in {0,2,4,6}
   {
    \draw [fill=black](\x,\y) circle (0.08cm);
    \draw [fill=black](\x+1.57,\y) circle (0.08cm);
    \draw [fill=black](\x+1-.57,\y) circle (0.08cm);
    \draw [fill=black](\x+2,\y) circle (0.08cm);
    \draw [fill=black](\x,\y+1.57) circle (0.08cm);
    \draw [fill=black](\x+1.57,\y+1.57) circle (0.08cm);
    \draw [fill=black](\x+1-.57,\y+1.57) circle (0.08cm);
    \draw [fill=black](\x+2,\y+1.57) circle (0.08cm);
    \draw [fill=black](\x,\y+1-.57) circle (0.08cm);
    \draw [fill=black](\x+1.57,\y+1-.57) circle (0.08cm);
    \draw [fill=black](\x+1-.57,\y+1-.57) circle (0.08cm);
    \draw [fill=black](\x+2,\y+1-.57) circle (0.08cm);
    \draw [fill=black](\x,\y+2) circle (0.08cm);
    \draw [fill=black](\x+1.57,\y+2) circle (0.08cm);
    \draw [fill=black](\x+1-.57,\y+2) circle (0.08cm);
    \draw [fill=black](\x+2,\y+2) circle (0.08cm);
   }
\end{tikzpicture}
\caption{On the left, the blue region is the interior of $\operatorname{star}(v_i)$
and the red line regions are the integration domains of the Scott-Zhang interpolant $I_2^3$
for the $[\mathbb{Q}_3]^2$ element.
On the right, the blue regions are domains of local problems solved
in the robust prolongation operator.}
\label{fig:starvi}
\end{figure}
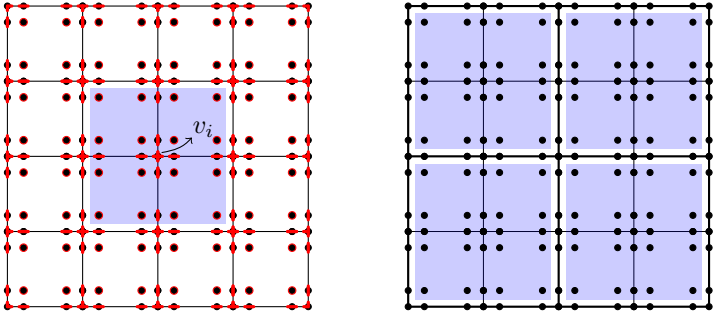

\begin{lemma}
\label{lemma:Ih}
For every vertex $v_i$, define 
$\Omega_i$ to be the interior of $\operatorname{star}(v_i)$. 
There exists an interpolation operator $I_h:(H^1_0(\Omega))^d\rightarrow V_h^k$ such that 
	\begin{enumerate}
		\item[(a)] $I_h$ is linear and continuous, 
		\item[(b)] $(q_h, \divergence I_h(\veltest)) = (q_h, \divergence \veltest)$
			for all $q_h\in Q_h^{k-1}$ and $\veltest \in (H^1_0(\Omega))^d$,
		\item[(c)] $I_h(\veltest_h) = \veltest_h$ for $\veltest_h\in V_h^k$,
		\item[(d)] $\operatorname{supp}\left(I_h(\veltest)\right)\subset \operatorname{star}(v_i)
			  \quad 
			  \forall\veltest\in (H^1_0(\Omega))^d$ such
			  that $\operatorname{supp}(\veltest)\subset\Omega_i$.
	\end{enumerate}
\end{lemma}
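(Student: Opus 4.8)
The plan is to build $I_h$ additively as a Scott--Zhang quasi-interpolant onto $V_h^k$ plus a local divergence-correcting term, and to reduce the Fortin property (b) to a finite set of elementwise moment conditions. Since $Q_h^{k-1}$ is discontinuous, (b) is equivalent to requiring, on every element $K$, that $\int_K q_h\,\divergence(I_h(\veltest)-\veltest)\,d\boldvar x=0$ for all $q_h\in\mathbb{P}_{k-1}(K)$. Integrating by parts on each $K$ turns this into two families of conditions on the error $\boldvar e:=I_h(\veltest)-\veltest$: the facet normal-flux moments $\int_e q\,\boldvar e\cdot\boldvar n$ for $q$ ranging over $\mathbb{P}_{k-1}(e)$ on every mesh facet $e$, and the interior moments $\int_K\nabla q\cdot\boldvar e$ for $q\in\mathbb{P}_{k-1}(K)$. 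Matching all facet moments makes the boundary term in the elementwise integration by parts vanish for every $q$, after which matching the interior moments yields (b). I would satisfy these hierarchically, fixing facets first and cell interiors last.

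First I would fix the continuity backbone with a Scott--Zhang operator $R_h$ onto $V_h^k$ whose dual functionals are averaged integrals over mesh facets (the red segments in \cref{fig:starvi}) and over element interiors. By the standard Scott--Zhang theory $R_h$ is linear and $H^1$-stable, reproduces $V_h^k$, preserves homogeneous Dirichlet values, and is local in the sense that the value at a Lagrange point depends only on $\veltest$ over a single facet or cell containing it. Choosing these dual facets and cells inside $\operatorname{star}(v_i)$ already gives (a), the $V_h^k$ reproduction in (c), and the support property (d) for $R_h$. Crucially, I would arrange the facet functionals so that $R_h$ additionally reproduces the mean normal flux on every facet, which by summation over $\partial K$ secures the lowest (constant-pressure) Fortin moment on each element.

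Next I would correct the remaining moments by two stages of bubble addition. For each interior facet $e$ I add a facet bubble $\boldvar w_e\in V_h^k$, supported on the elements sharing $e$ and vanishing on all other facets, whose normal trace on $e$ is chosen to annihilate the facet residual; I then add on each $K$ an interior bubble supported in $K^o$ to annihilate the interior residual. Because interior bubbles have vanishing trace they do not disturb the facet moments, and facet bubbles vanishing at the facet boundary do not disturb the flux balance already fixed by $R_h$, so the two stages decouple. Each correction is defined through the residual $\boldvar e$ tested against a fixed local dual basis, so it vanishes identically when $\veltest\in V_h^k$ (giving the full property (c)) and is supported only where this residual is nonzero, i.e.\ within $\operatorname{star}(v_i)$ (giving (d)); the usual scaling, trace and inverse estimates bound each correction by the local $H^1$-norm of $\veltest$, and summation with finite overlap gives the global $H^1$-stability in (a).

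The hard part will be the local solvability and $h$-uniform stability of the facet moment problems, which is genuinely element specific. On a quadrilateral the normal trace of a $\mathbb{Q}_k$ function vanishing at the two endpoints of an edge spans a space of dimension $k-1$, exactly one less than $\dim\mathbb{P}_{k-1}(e)=k$; this is precisely why $R_h$ must be arranged to reproduce the single missing mean-flux moment, leaving the remaining $k-1$ moments to the edge bubbles, for which the local pairing matrix is nonsingular by a reference-element computation together with an affine scaling argument. In three dimensions the counting is tighter---for small $k$ the face-interior bubbles undercount $\mathbb{P}_{k-1}$ on a hexahedral face---so I would let the edge and vertex degrees of freedom set by $R_h$ contribute to the face moments as well, coordinating them through the Scott--Zhang averaging. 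Verifying that the resulting hierarchical moment problem is uniformly solvable, with constants independent of $h$, is the technical heart of the proof, and it is established by transporting every local problem to the fixed reference element $\hat K$ via the affine equivalence assumed on $\mathcal{T}_h$.
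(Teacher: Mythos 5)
Your overall architecture (Scott--Zhang backbone plus local divergence corrections, with the constant-pressure moment secured first) is in the right spirit, but the step where you split the elementwise Fortin condition into \emph{separate} facet normal-flux moments and interior gradient moments introduces a gap that you acknowledge but do not close, and that the actual argument never needs to face. Requiring $\int_e q\,\boldvar e\cdot\boldvar n = 0$ for all $q\in\mathbb{P}_{k-1}(e)$ on every facet is a sufficient but far stronger condition than the Fortin property $(q_h,\divergence(I_h(\veltest)-\veltest))=0$; it is exactly this over-specification that produces your dimension-count obstruction. In 3D for $\mathbb{Q}_k$ on a hexahedral face the face-interior trace space has dimension $(k-1)^2$ against $\dim\mathbb{P}_{k-1}(e)=k(k+1)/2$ moments (e.g.\ $1$ versus $3$ for $k=2$), and your proposed escape---recruiting the edge and vertex degrees of freedom ``coordinated through the Scott--Zhang averaging''---is not a local construction: those degrees of freedom are shared by several faces and elements, so matching all face moments simultaneously becomes a coupled global constraint whose solvability you do not establish. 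As written, the ``technical heart'' of your proof is therefore missing precisely where the element-specific work has to happen.

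The paper avoids this entirely. It first builds $\tilde I_h(\veltest)=I_2^k(\veltest)+I_1(\veltest-I_2^k(\veltest))$, where $I_2^k$ is the Scott--Zhang projector onto $V_h^k$ and $I_1$ is the low-order divergence-preserving operator of Heuveline--Schieweck, so that $\tilde I_h$ satisfies (a), (c), (d) and the Fortin property only against $Q_h^0$, i.e.\ the single constant-pressure moment $\int_K\divergence(\tilde I_h(\veltest)-\veltest)=0$ per element. The upgrade to all of $Q_h^{k-1}$ is then delegated to \cite[Lemma~2.5]{FarrellMitchellScottWechsung20b} together with the \emph{local} inf-sup stability of $V_h^k\times Q_h^{k-1}$ on a single element $K$: a correction supported in $K^o$ (hence leaving every facet flux untouched) can match all remaining mean-zero moments $\int_K q\,\divergence(\cdot)$ at once, because the pairing of interior velocity bubbles against mean-zero $\mathbb{P}_{k-1}(K)$ pressures is uniformly inf-sup stable on the reference element. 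No facet moment beyond the mean flux ever needs to be matched. If you replace your facet/interior moment splitting with this local inf-sup argument (keeping your Scott--Zhang stage and your mean-flux reproduction, which plays the role of $I_1$), your proof closes and essentially coincides with the paper's.
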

\begin{proof}
	Our goal is to construct $\tilde{I}_h: (H^1_0(\Omega))^d\rightarrow V_h^k$ that
satisfies the assumptions (A1) in 
\cite[Lemma 2.5]{FarrellMitchellScottWechsung20b}, which coincide with
(a)--(d), except that in (b), $Q_h^{k-1}$ is replaced by $Q_h^0$. Once we have
verified these conditions for $\tilde{I}_h$, the result in
\cite{FarrellMitchellScottWechsung20b} together with
the local inf-sup stability of $V_h^k\times Q_h^{k-1}$ on mesh element
$K\in \mathcal{T}_h$ guarantees 
	the existence of a linear map $I_h:(H^1_0(\Omega))^d \rightarrow V_h^k$
satisfying (a)--(d). Thus, what remains is to construct an appropriate
operator ${\tilde I}_h$.

In \cite{HeuvelineSchieweck07}, the macro element technique is used 
for proving the inf-sup stability of the pair 
$V_h^k\times Q_h^{k-1}$. This
involves a proof of a local inf-sup condition on macro elements, with macro
elements being the mesh elements $K\in \mathcal{T}_h$, and a
global inf-sup condition proof for the pair  $(V_h^2,Q_h^0)$. 
In the global inf-sup stability proof, a continuous divergence-preserving
interpolation $I_1:(H_0^1(\Omega))^d\rightarrow V_h^2$ 
is constructed, which
satisfies 
\begin{align}
&(\divergence I_1(\veltest), \presstest_h) = (\divergence \veltest, \presstest_h)
	\quad \forall \presstest_h\in Q_h^0, \nonumber\\
&\operatorname{supp}(I_1(\veltest)) \subset \operatorname{star}(v_i)\quad 
	\forall \veltest\in (H^1_0(\Omega))^d \text{ such that }\operatorname{supp}(\veltest) \subset \Omega_i\label{eq:I1con2}.
\end{align}
Let $I^k_2: (H_0^1(\Omega))^d\rightarrow V_h^k$ be the Scott-Zhang interpolation
\cite{ScottZhang90} operator with integration domains shown in red
in
\cref{fig:starvi}. $I_2^k$ satisfies
\begin{align}
&I_2^k(\veltest_h) = \veltest_h\quad\forall \veltest_h\in
  V_h^k,\nonumber \\
&\operatorname{supp}(I_2^k(\veltest)) \subset \operatorname{star}(v_i)\quad
	\forall \veltest\in (H^1_0(\Omega))^d \text{ such that }\operatorname{supp}(\veltest) \subset \Omega_i\label{eq:I2con2}.
\end{align}
Define $\tilde{I}_h(\veltest) := I_2^k(\veltest)+I_1(\veltest-I_2^k(\veltest))$.
We now show that $\tilde{I}_h(\veltest)$ satisfies (A1).
First, since both $I_1$ and $I_2^k$ are linear and continuous, 
$\tilde{I}_h$ is also linear and continuous. 
Second, 
\begin{align*}
	(\divergence\tilde{I}_h(\veltest), \presstest) 
	&= (\divergence I_2^k (\veltest), \presstest)+
	  (\divergence I_1(\veltest-I_2^k (\veltest)), \presstest)\\
	&=(\divergence I_2^k (\veltest), \presstest) 
	+ (\divergence (\veltest-I_2^k (\veltest)), \presstest)
    =(\divergence\veltest, \presstest).
\end{align*}
Third, 
$
    \tilde{I}_h(\veltest_h) = I^k_2(\veltest_h) + I_1(\veltest_h-I^k_2(\veltest_h))
    =\veltest_h + I_1(\boldvar 0)
    =\veltest_h$.
Last, from \eqref{eq:I1con2}, \eqref{eq:I2con2}, we have
	$\tilde{I}_h(\veltest)\in V_i$ for all $\veltest\in (H^1_0(\Omega))^d$ such that
	$\operatorname{supp}(\veltest)\subset \Omega_i$.  
\end{proof}

Finally, it remains to show that any discretely
divergence-free field $\vel_0$ can be modified in the interior of each
mesh element to obtain a continuously divergence-free field.
\begin{lemma}\label{lemma:divfree}
For each $\vel_0\in \mathcal{N}_h^k:=\{\vel_h\in V_h^{k}:
\Pi_{Q_h^{k-1}} (\nabla \cdot\vel_h) = 0\}$, there exists
	$\boldvar\omega\in V_{\text{loc}}$, defined in \eqref{eq:vloc},
such that 
\begin{equation}\label{eq:lemma45}
\divergence(\vel_0+\boldvar \omega)=0,\quad 
\|\boldvar \omega\|_1^2 \preceq \|\vel_0\|_1^2,\quad
\boldvar w_h = I_h(\boldvar \omega)\in V_{\text{loc}},
\end{equation}
	for $I_h:(H^1_0(\Omega))^d\rightarrow V_h^k$ from \cref{lemma:Ih}.
\end{lemma}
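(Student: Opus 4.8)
The plan is to build $\boldvar\omega$ element by element as a local right inverse of the divergence. First I would record the only consequence of the hypothesis $\vel_0\in\mathcal N_h^k$ that is needed: because the pressure space $Q_h^{k-1}$ is discontinuous and (for $k\geq 2$) contains the element-wise constants, the condition $\Pi_{Q_h^{k-1}}(\divergence\vel_0)=0$ is equivalent to the element-wise statement that $\divergence\vel_0$ is $L^2(K)$-orthogonal to $\mathbb{P}_{k-1}(K)$ on every $K\in\mathcal T_h$ (the global mean-zero constraint on $Q_h^{k-1}$ is harmless, since $\divergence\vel_0$ already has zero mean over $\Omega$ by the homogeneous boundary conditions, so the projection onto $Q_h^{k-1}$ agrees with the element-wise $L^2$-projection). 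Testing against the constant $1\in\mathbb{P}_{k-1}(K)$ then gives the compatibility condition $\int_K\divergence\vel_0\,d\boldvar x=0$ for each $K$, which is exactly what is required to solve a local divergence problem with homogeneous boundary data.

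Next I would invoke, on each element $K$, the classical surjectivity of $\divergence:(H^1_0(K))^d\to L^2_0(K)$: it yields $\boldvar\omega_K\in(H^1_0(K))^d$ with $\divergence\boldvar\omega_K=-\divergence\vel_0|_K$ and $\|\boldvar\omega_K\|_{H^1(K)}\preceq\|\divergence\vel_0\|_{L^2(K)}$, where mapping to the reference element $\hat K=(-1,+1)^d$ (using affine equivalence) together with a scaling argument makes the constant mesh-independent. Since each $\boldvar\omega_K$ has vanishing trace on $\partial K$, the traces match across interfaces, so the glued field $\boldvar\omega:=\sum_K\boldvar\omega_K$ lies in $(H^1_0(\Omega))^d$ and vanishes on every element edge, i.e.\ $\boldvar\omega\in V_{\text{loc}}$; moreover $\divergence(\vel_0+\boldvar\omega)=\divergence\vel_0-\divergence\vel_0=0$. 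Summing the local estimates and using $\|\divergence\vel_0\|_0\leq\sqrt d\,|\vel_0|_1$ gives
\[
\|\boldvar\omega\|_1^2=\sum_{K\in\mathcal T_h}\|\boldvar\omega_K\|_{H^1(K)}^2\preceq\sum_{K\in\mathcal T_h}\|\divergence\vel_0\|_{L^2(K)}^2=\|\divergence\vel_0\|_0^2\preceq\|\vel_0\|_1^2,
\]
which establishes the first two requirements in \eqref{eq:lemma45}.

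The remaining and most delicate claim is $\boldvar\omega_h=I_h(\boldvar\omega)\in V_{\text{loc}}$, and this is where the explicit structure $I_h=I_2^k+I_1(\mathrm{id}-I_2^k)$ from \cref{lemma:Ih} must be used rather than only its abstract properties. The organizing observation is that a finite element function lies in $V_{\text{loc}}$ precisely when all of its boundary (vertex, edge, and in 3D face) degrees of freedom vanish, and that both interpolants fix these boundary degrees of freedom through data supported on the element skeleton. For the Scott--Zhang operator, the dual functionals attached to boundary nodes integrate $\boldvar\omega$ over the lower-dimensional patches shown in red in \cref{fig:starvi}, all contained in the skeleton; since $\boldvar\omega$ vanishes there, every boundary degree of freedom of $I_2^k(\boldvar\omega)$ vanishes, so $I_2^k(\boldvar\omega)\in V_{\text{loc}}$. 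Hence $\boldvar\eta:=\boldvar\omega-I_2^k(\boldvar\omega)\in V_{\text{loc}}$ as well, so its edge traces and edge flux moments $\int_e\boldvar\eta\cdot\boldvar n$ all vanish; as the lower-order divergence-correcting operator $I_1$ acts on exactly these skeleton moments, $I_1(\boldvar\eta)$ again has vanishing boundary degrees of freedom, giving $I_h(\boldvar\omega)=I_2^k(\boldvar\omega)+I_1(\boldvar\eta)\in V_{\text{loc}}$ and completing \eqref{eq:lemma45}. I expect this last verification to be the principal obstacle: establishing that $I_1$ maps skeleton-vanishing fields to skeleton-vanishing fields requires tracing through the Heuveline--Schieweck construction underlying \cref{lemma:Ih}, whereas the local solvability, the gluing, and the norm bound are essentially standard.
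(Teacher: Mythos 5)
Your construction of $\boldvar\omega$ is correct but follows a genuinely different route from the paper. The paper obtains $\boldvar\omega$ in one stroke by solving a single mixed variational problem on the pair $V_{\text{loc}}\times Q_{\text{loc}}$, where $Q_{\text{loc}}:=\{q\in L^2_0(\Omega):\Pi_{Q_h^0}(q)=0\}$: testing with $\veltest=\boldvar 0$ gives $(\divergence(\vel_0+\boldvar\omega),q)=0$ for all $q\in Q_{\text{loc}}$, the divergence theorem plus $Q_h^0\subset Q_h^{k-1}$ shows $\divergence(\vel_0+\boldvar\omega)$ itself lies in $Q_{\text{loc}}$, hence it vanishes, and the bound $\|\boldvar\omega\|_1\preceq\|\divergence\vel_0\|_0$ comes from the inf-sup stability of $V_{\text{loc}}\times Q_{\text{loc}}$. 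You instead solve the pure divergence equation element by element with a Bogovskii-type right inverse of $\divergence:(H^1_0(K))^d\to L^2_0(K)$ and glue. Both proofs use exactly the same piece of the hypothesis, namely $\int_K\divergence\vel_0\,d\boldvar x=0$ on each $K$ (in the paper this is what puts $\divergence(\vel_0+\boldvar\omega)$ into $Q_{\text{loc}}$; for you it is the local compatibility condition), and your observation about the global mean-zero constraint being harmless is right. Your route is more elementary and makes the mesh-independence of the constant transparent through the affine scaling argument; it also sidesteps a small blemish in the paper's version, where the inf-sup bound is stated for the form $B$ containing the viscosity-weighted $a(\cdot,\cdot)$, so that tracking viscosity-independence of the constant requires an extra word. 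The paper's route packages the same element-local solvability into one inf-sup statement and needs no explicit gluing. On the final claim $I_h(\boldvar\omega)\in V_{\text{loc}}$, you are actually more explicit than the paper, which disposes of it with ``from the locality of $I_h$''; your skeleton-degrees-of-freedom argument for $I_2^k$ and $I_1$ is the right mechanism, with one small omission: the operator from \cref{lemma:Ih} is not $\tilde I_h=I_2^k+I_1(\mathrm{id}-I_2^k)$ itself but $\tilde I_h$ followed by a further correction from the local inf-sup argument of \cite{FarrellMitchellScottWechsung20b}; that correction is supported in element interiors, so it does not disturb membership in $V_{\text{loc}}$, but it should be mentioned.
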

\begin{proof} 
Let
$
	Q_{\text{loc}} :=\{q\in L^2_0(\Omega): \Pi_{Q_h^0}(q)=0\}.
$
The pair $V_{\text{loc}}\times Q_{\text{loc}}$ is inf-sup stable for
the bilinear form 
\begin{equation*}
B((\vel, \press), (\veltest, \presstest)) :=  a(\vel, \veltest)-(\divergence
       \veltest,\press)-(\divergence \vel, \presstest).
\end{equation*}
Let $(\boldvar w,r)$ be the solution of the variational problem
\begin{equation}
	B((\boldvar\omega,r), (\veltest,\presstest)) = -(\divergence\vel_0,\presstest)
	\quad \forall(\veltest,\presstest) \in V_{\text{loc}}\times Q_{\text{loc}}.
\end{equation}
Choosing $\veltest=\boldvar 0$, we get
	\begin{equation}
		\label{eq:df}
		(\divergence(\boldvar\omega+\vel_0),q) = 0\quad \forall q\in Q_{\text{loc}}. 
	\end{equation}
From the divergence theorem, we have
$\Pi_{Q_h^0}\left(\nabla\cdot \boldvar\omega\right) = 0$, and
since $Q_h^0 \subset Q_h^{k-1}$, 
$\Pi_{Q_h^0}\left(\nabla\cdot\vel_0\right) = 0$. 
We therefore have $\Pi_{Q_h^0}\left(\nabla\cdot
(\boldvar\omega+\vel_0)\right) = 0$ and so 
$\divergence\left(\boldvar\omega+\vel_0\right)\in Q_{\text{loc}}$. 
From \eqref{eq:df},  
we get $\divergence (\boldvar\omega+\vel_0) = 0$. 
By the inf-sup stability, 
we have
\begin{equation}
       \|\boldvar\omega\|_1
	   \preceq
	   \sup_{\substack{\veltest\in V_{\text{loc}}\\\presstest_h\in Q_{\text{loc}}}}
       \frac{B((\boldvar \omega, \press), (\veltest, \presstest))}
                {\sqrt{\|\veltest\|_1^2+\|\presstest\|_0^2}}
       \leq\sup_{\substack{\veltest\in V_{\text{loc}}\\\presstest_h\in Q_{\text{loc}}}}
       \frac{\|\divergence\vel_0\|_0\|\presstest\|_0}
                {\sqrt{\|\veltest\|_1^2+\|\presstest\|_0^2}}\\
       \leq \|\divergence \vel_0\|_0.
\end{equation}
Now, the middle statement in \eqref{eq:lemma45} follows from $ \|\divergence\vel_0\|_{0}\preceq\|\vel_0\|_{1}$.
Lastly, from the locality of $I_h$, $\boldvar\omega_h$ remains
$\boldvar 0$ on
edges of elements and hence $\boldvar\omega_h \in V_{\text{loc}}$.
\end{proof}

The interpolation operator $I_h$ obtained in \cref{lemma:Ih}
and $\boldvar \omega\in V_{\text{loc}}$ from
\cref{lemma:divfree} satisfy (4) and (5) of
\cref{assump1}. 
By applying \cref{prop:conDAequiv}, we obtain the
estimates
\eqref{eq:con1}, \eqref{eq:con2} with 
\begin{equation}\label{eq:Vi-alternative}
\begin{aligned}
	&\Omega_i:= \text{ the interior of $\operatorname{star}(v_i)$},\\
	\text{and }& V_i:=\{I_h(\veltest):\veltest\in 
	(H^1_0(\Omega))^d,\enskip\operatorname{supp}(\veltest)\subset \Omega_i\}.
\end{aligned}
\end{equation}
In addition with the inf-sup stability of 
$V_h^k\times Q_h^{k-1}$ for the mixed problem
\begin{equation*}
B((\vel, \press), (\veltest, \presstest)) :=  a(\vel, \veltest)-(\divergence
       \veltest,\press)-(\divergence \vel, \presstest),
\end{equation*}
we apply \cite[Proposition 2.1]{FarrellMitchellScottWechsung20b} and 
conclude the $\gamma$-independent
spectral equivalence of $\mathbf D_{h,\gamma}$ and  $\mathbf A_{h,\gamma}$.
\begin{remark}
  Note that the definition of the subspace $V_i$ in
  \eqref{eq:Vi-alternative} is equivalent to the definition
  \eqref{eq:viim}. This is since the interpolation
  $I_h$ satisfies
  $\operatorname{supp}\left(I_h(\veltest)\right)\subset
	\operatorname{star}(v_i)$ for $\veltest\in (H^1_0(\Omega))^d$ with
  $\operatorname{supp}(\veltest)\subset\Omega_i$. Our numerical
  implementation is based on the definition \eqref{eq:viim}.
\end{remark}

\subsection{Prolongation}
A parameter-robust multigrid solver relies on a prolongation operator,
$\tilde{P}_H$, that is continuous in the energy norm with $\gamma$-independent
constants \cite{Schoeberl99}, i.e., 
\begin{equation}
	\label{eq:conP}
	\|\tilde{P}_H(\vel_H)\|_{\mathbf{A}_{h,\gamma}}\preceq \|\vel_H\|_{\mathbf{A}_{H,\gamma}}.
\end{equation}
This can be obtained by modifying the standard prolongation
$P_H$ so that it maps divergence-free fields on the coarse grid to
nearly divergence-free fields on the fine grid. 
A similar modification as done for
$[\mathbb{P}_2]^2\times \mathbb P_{0}$\cite{BenziOlshanskii06,Schoeberl99},
$[\mathbb{P}_2\oplus B_3^F]^3\times \mathbb P_{0}$\cite{FarrellMitchellWechsung19}
and Scott-Vogelius discretizations\cite{FarrellMitchellScottWechsung20b}
applies for the $[\mathbb Q_k]^d\times \mathbb P_{k-1}^{\text{disc}}$, $k\geq 2$, $d=2,3$ discretization
that we consider.
We define $\tilde{P}_H:V_H^k\rightarrow
V_h^k$ as
\begin{equation}
	\label{eq:Ptilde}
	\tilde{P}_H (\boldvar u_H) := P_H(\boldvar u_H) - \tilde{\boldvar u}_h,
\end{equation}
where $\tilde{\boldvar u}_h$ is the solution of 
\begin{equation*}
	a_{h,\gamma}(\tilde{\boldvar u}_h, \hat{\veltest}_h) = 
	\gamma (\Pi_{Q_h^{k-1}}(P_H(\boldvar u_H)), \Pi_{Q_h^{k-1}}(\nabla\cdot(\hat{\veltest}_h)))\quad \text{for all } \hat{\veltest}_h \in \hat{V}_h,
\end{equation*}
with 
	$\hat{V}_h := \{\boldvar v_h \in V_h^k,~\operatorname{supp}(\veltest_h)\subset K \text{ for
some } K \in\mathcal{T}_H\}$.
In the next lemma we will show that the conditions of~\cite[Proposition 3.1]{FarrellMitchellScottWechsung20b} are satisfied, and that hence
$\tilde P_H$ is continuous in the sense of~\eqref{eq:conP}.
We first
define the coarse and fine pressure spaces as
\begin{align*}
  \tilde{Q}_H&:=\{q\in L^2: q \text{ is constant on each coarse element } K\in \mathcal{T}_H\},\\
  \hat{Q}_h &:= \{q_h \in Q_h^{k-1}: \Pi_{\tilde{Q}_H} q_h = 0\},
\end{align*}
and then summarize the properties that imply \eqref{eq:conP} next.
\begin{lemma}\label{lemma:P}
The following statements are satisfied:
\begin{itemize}
\item[(a)] $Q_h^{k-1} = \tilde{Q}_H\oplus\hat{Q}_h$,
\item[(b)] $(\divergence\hat{\veltest}_h, \tilde{q}_H)=0$ for all 
  $\tilde{q}_H\in\tilde{Q}_H,\hat{\veltest}_h\in\hat{V}_h$
\item[(c)] the pairing $\hat{V}_h\times\hat{Q}_h$ is inf-sup stable 
  for the discretization of \eqref{eq:weakstokes}, i.e.,
  \begin{equation}
    \label{eq:hatsp}
    \inf_{\hat{q}_h\in\hat{Q}_h}\sup_{\hat{\veltest}_h\in\hat{V}_h}
    \frac{(\hat{q}_h, \divergence{\hat{\veltest}_h})}
	 {\|\hat{\veltest}_h\|_1\|\hat{\presstest}\|_0}\geq c>0.
  \end{equation}
\item[(d)] $P_H: V_H^k\rightarrow V_h^k$, the standard prologation operator,
  preserves the divergence with respect to $\tilde{Q}_H$, i.e.\ 
  \begin{equation}
    (\divergence P_H(\veltest_H), \tilde{\presstest}_H) = 
    (\divergence\veltest_H, \tilde{\presstest}_H)\quad
    \text{for all }\tilde{q}_H\in\tilde{Q}_H,~\veltest_H\in V_H^k.
  \end{equation}
\end{itemize}
\end{lemma}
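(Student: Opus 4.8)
The plan is to recognize the four statements (a)--(d) as exactly the hypotheses of \cite[Proposition 3.1]{FarrellMitchellScottWechsung20b}, so that once they are verified the $\gamma$-robust energy continuity \eqref{eq:conP} of $\tilde P_H$ follows immediately from that proposition. I expect (a), (b) and (d) to reduce to short structural arguments, whereas the local inf-sup bound (c) is the genuine obstacle and deserves the bulk of the effort.

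For (a) I would work with the $L^2$-projection $\Pi_{\tilde Q_H}$ onto coarse-element-wise constants. Since every $K\in\mathcal T_H$ is a union of fine elements and a constant is a degree-zero member of $\mathbb P_{k-1}$ on each fine element, coarse-constant functions lie in $Q_h^{k-1}$ provided they have zero mean; and $\Pi_{\tilde Q_H}$ preserves zero mean because $\int_\Omega \Pi_{\tilde Q_H}q_h=\sum_{K}\int_K q_h=\int_\Omega q_h$. Splitting $q_h=\Pi_{\tilde Q_H}q_h+(q_h-\Pi_{\tilde Q_H}q_h)$ then writes any $q_h\in Q_h^{k-1}$ as a $\tilde Q_H$-part plus a part annihilated by $\Pi_{\tilde Q_H}$, i.e.\ lying in $\hat Q_h$; idempotency of the projection gives $\tilde Q_H\cap\hat Q_h=\{0\}$, so the sum is direct. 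For (b), any $\hat{\veltest}_h\in\hat V_h$ is supported in a single coarse element $K$ and, being continuous and vanishing outside $K$, has zero normal trace on $\partial K$; with $\tilde q_H\equiv c_K$ on $K$ the divergence theorem yields $(\divergence\hat{\veltest}_h,\tilde q_H)=c_K\int_{\partial K}\hat{\veltest}_h\cdot\boldvar n\,ds=0$.

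Property (d) I would obtain from nestedness: restricting a $\mathbb Q_k$ polynomial to a sub-quadrilateral/hexahedron of a refined element is again $\mathbb Q_k$, so $V_H^k\subset V_h^k$ and the standard prolongation $P_H$ is the canonical inclusion of the nested spaces. Then $\divergence P_H(\veltest_H)=\divergence\veltest_H$ holds pointwise, which gives (d) and in fact the stronger statement that the divergence is preserved against all of $Q_h^{k-1}$, not only $\tilde Q_H$.

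The crux is (c). Because functions in $\hat V_h$ are supported in individual coarse elements and the constraint $\Pi_{\tilde Q_H}\hat q_h=0$ forces $\int_K\hat q_h=0$ on every coarse $K$, both the numerator and the norms in the inf-sup quotient decouple across coarse elements, and the global constant is the minimum over $K$ of local ones. It therefore suffices to prove a single-element inf-sup bound on $K$ between velocities in $V_h^k$ vanishing on $\partial K$ and pressures in $Q_h^{k-1}$ with zero mean over $K$; this is precisely the homogeneous-Dirichlet discrete Stokes inf-sup condition for the $\mathbb Q_k\times\mathbb P_{k-1}^{\text{disc}}$ pair posed on $K$ with its fine sub-mesh. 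I would transfer it to a fixed reference macro-element via the affine map $F_K$: since all coarse elements are affinely equivalent to the reference configuration carrying the same refinement pattern, the $H^1$- and $L^2$-norms in the quotient scale by powers of $h$ that cancel, and the local constant equals the positive, $h$-independent inf-sup constant of a fixed finite-dimensional problem on the reference macro-element. Positivity of that reference constant is equivalent to the triviality, among mean-zero pressures, of the discrete kernel of pressures that are divergence-orthogonal to all interior velocities, which follows from the inf-sup stability of $\mathbb Q_k\times\mathbb P_{k-1}^{\text{disc}}$ established by the macro-element technique of \cite{HeuvelineSchieweck07}. The hard part is exactly this step: checking that the reference local inf-sup constant is genuinely positive and that the scaling to the reference element produces an $h$-independent lower bound; the remaining norm estimates are routine.
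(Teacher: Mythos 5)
Your proposal is correct and follows essentially the same route as the paper: the explicit average/deviation splitting for (a), the divergence theorem with vanishing traces for (b), reduction of (c) to a local inf-sup condition on each coarse element (which the paper simply asserts, citing the macro-element analysis of the $\mathbb{Q}_k\times\mathbb{P}_{k-1}^{\mathrm{disc}}$ pair, whereas you carefully spell out the mean-zero/homogeneous-Dirichlet reduction and the affine scaling to a reference macro-element), and nestedness of $V_H^k\subset V_h^k$ for (d). Your more detailed treatment of (c) is a faithful elaboration of the one-line argument in the paper rather than a different method.
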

\begin{proof}
To show (a), note that any $q_h\in Q_h^{k-1}$ can be decomposed as
\begin{equation*}
    q_h
    = \sum_{K\in\mathcal{T}_H} \left(q_h-\frac{1}{|K|}\int_K q_h d\boldvar x\right)\chi_K
    +\sum_{K\in\mathcal{T}_H} \left(\frac{1}{|K|}\int_K q_h d\boldvar x\right)\chi_K,
\end{equation*}
where $\chi_K$ is the indicator function for $K$. This shows that
$Q_h^{k-1} = \hat{Q}_h \bigoplus \tilde{Q}_H$.

Next, since $\hat{\veltest}_h=0$ on $\partial K$ for all $K\in \mathcal{T}_H$,
the divergence theorem implies (b).
Since $V_h^k\times Q_h^{k-1}$ is inf-sup stable for the discretization of
\eqref{eq:weakstokes} on each coarse element $K\in\mathcal{T}_H$, i.e.
	\begin{equation*}
		\inf_{q_h\in Q_h^{k-1}}\sup_{\veltest_h\in V_h^k}
		\frac{(q_h, \divergence \veltest_h)_K}
			 {\|\veltest_h\|_{H^1(K)}\|\presstest\|_{L^2(K)}}\geq c_1>0,
	\end{equation*}
by definition of $\hat{V}_h$ and $\hat{Q}_h\subset Q_h^{k-1}$, 
\eqref{eq:hatsp} holds. 
Finally, since $V_H^k \subset V_h^k$, the standard prolongation operator $P_H$ 
is the identity on $V_H^k$, i.e., $P_H\veltest_H = \veltest_H$ for $\veltest_H\in V_H^k$. 
Therefore,
	\begin{equation*}
		(\divergence P_H(\veltest_H), \tilde{\presstest}_H) = 
		(\divergence \veltest_H, \tilde{\presstest}_H),
	\end{equation*}
for all $\tilde{q}_H\in\tilde{Q}_H$, which ends the proof.
\end{proof}
\Cref{lemma:P} verifies the assumptions of
\cite[Proposition 3.1]{FarrellMitchellScottWechsung20b}, whose
application shows that $\tilde{P}_H$ satisfies \eqref{eq:conP}.

\section{Numerical results}\label{sec:results}
In this section, we study the convergence of the linear Stokes solver
combining the AL preconditioners $\mathbf{P}_1$ and
$\mathbf{P}_2$ (described in \cref{sec:AL}) and the parameter-robust
multigrid scheme for the (1,1)-block of the augmented system
\eqref{eq:augstokessys} (described in \cref{sec:multigrid}).  After
providing details of the implementation in \cref{sec:implem}, we study
our solver using two test problems. In
\cref{sec:sinker}, we use the multi-sinker linear Stokes benchmark,
which has already been used in previous sections of this paper to
illustrate basic preconditioning properties. In \cref{sec:tensor}, we
use a nonlinear problem with viscoplastic rheology and study the
behavior of the solver for Newton-type linearizations.

\subsection{Algorithms and implementation}\label{sec:implem}
Our numerical experiments are conducted using 
the open source library Firedrake~\cite{Rathgeber2016,Dalcin2011,
KarypisKumar98,Chaco95,Homolya2016,McRae2016,Luporini2016,Homolya2017,
Homolya2017a,Kirby18,Gibson2020}. 
All problems are specified in their weak forms using the Unified Form Language~\cite{Alnaes14}.
For parallel linear algebra, Firedrake
relies on PETSc \cite{BalayGroppMcInnesEtAl99c}.
The block preconditioner \eqref{eq:precond} is
built up using PETSc's field split preconditioner.
For applications of the inverse Schur complement approximation
$\hat{\mathbf S}^{-1}$ in \eqref{eq:precond}, we assemble the block-diagonal matrices
$\mathbf M_p(1/\mu)^{-1}+\gamma \mathbf M_p^{-1}$ and
$\left(1+\gamma\right) \mathbf M_p(1/\mu)^{-1}$ for $\mathbf P_1$ and
$\mathbf P_2$, respectively, and compute the block-diagonal inverses. 
For the inverse of the approximation of (1,1)-block of
the augmented system \eqref{eq:augstokessys}, we apply a full
geometric multigrid (GMG) cycle using the GMG implementation in
Firedrake \cite{Mitchell2016} with the level operators defined
by rediscretizing the PDEs on each level. For the $\gamma$-robust PSC smoother,
we use a custom preconditioner class that extracts the local problems on the star of each vertex from the
global assembled matrix and solves them using (dense) LU factorization.
We apply 5 pre/post-smoothing steps on each level. 
For the $\gamma$-robust transfer operator, we use Firedrake's ability to provide custom transfer operators.
The matrices required for the local problems on each coarse element are again extracted from the global assembled matrix and solved exactly.
Finally, on the coarsest level, we use the parallel direct
sparse solver MUMPS~\cite{MUMPS01,MUMPS02}.
In all experiments, we use the flexible Krylov solver
FGMRES. A schematic view of the full scheme can be seen in Figure~\ref{fig:algo-outline}.

We present results on
quadrilateral meshes, hexahedral meshes (obtained from extrusion of
quadrilateral meshes \cite{Bercea2016}), and tetrahedral meshes.
Firedrake is designed to run in parallel with the maximum number of
MPI processes being the number of mesh elements $K\in
\mathcal{T}_H$ on the coarse mesh.
For hexahedral meshes, the maximum number of MPI
processes is limited by the number of elements in the quadrilateral
mesh the hexahedral mesh is extruded from.
Having a large number of mesh elements is not only required for parallel distribution,
but we also find that in the presence of extreme viscosity
variations it is necessary that the coarse mesh in the multigrid
hierarchy is sufficiently fine to capture the basic structure of the
viscosity. If the coarse mesh is too coarse, the performance of the
multigrid preconditioner degrades.

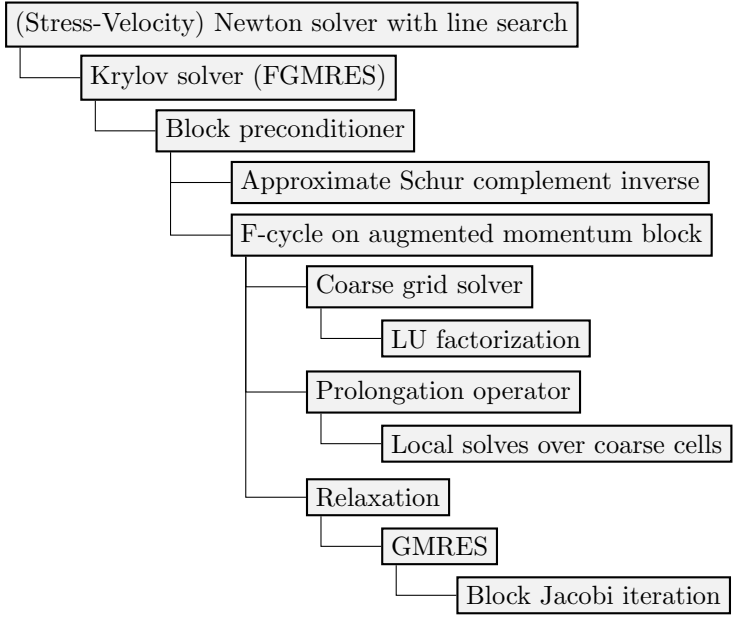
\begin{figure}
        \begin{tikzpicture}[%
        every
        node/.style={draw=black,thick,anchor=west,fill=white!95!black},
        selected/.style={draw=red,fill=red!30},
        optional/.style={dashed,fill=gray!50},
        grow via three points={one child at (1.0,-0.4) and
        two children at (1.0,-0.4) and (1.0,-1.1)},
        edge from parent path={([xshift=2mm] \tikzparentnode.south west) |- (\tikzchildnode.west)},
        growth parent anchor=south west
    ]
    \node {(Stress-Velocity) Newton solver with line search}
        child { node {Krylov solver (FGMRES)}
            child { node {Block preconditioner}
                child { node {Approximate Schur complement inverse}}
                child { node {F-cycle on augmented momentum block}
                    child { node {Coarse grid solver}
                        child { node {LU factorization}}
                    }
                    child [missing] {}
                    child { node {Prolongation operator}
                        child { node {Local solves over coarse cells}}
                    }
                    child [missing] {}
                    child { node {Relaxation}
                        child { node {GMRES}
                            child { node {Block Jacobi iteration}}
                        }
                    }
                }
            }
        };
    \end{tikzpicture}
    \caption{Outline of the full algorithm.}\label{fig:algo-outline}
\end{figure}

The source codes of our implementation are available in a public git
repository\footnote{\url{https://github.com/MelodyShih/vvstokes-al}}.
All experiments are run in parallel on TACC's Frontera or
NYU's Greene system.

\subsection{Multi-sinker problem}\label{sec:sinker}
This is a benchmark problem taken from
\cite{RudiStadlerGhattas17}. The same or
analogous problems have also been used in \cite{MayMoresi08,
MayBrownPourhiet15,BorzacchielloLericheBlottiereGuillet17}. The domain $\Omega$
is a unit square/cube $(0,1)^d$, $d=2,3$ with viscosity $\mu_{\min}>0$. 
Multiple circular/spherical lower viscosity sinkers with diameter
$\omega$ and viscosity $\mu_{\max}>0$ are placed randomly inside the domain. 
The sinker's boundaries are smoothed by a Gaussian kernel with parameter
$\delta$ controlling the smoothness (the lower, the smoother). We denote the 
number of sinkers by $n$ and their centers by $\boldvar c_i$, $i=1,\dots,n$.
The viscosity field $\visc(\boldvar x)\in \mathbb{R}$ is then
specified as
\begin{align*}
\visc(\boldvar x) &:= (\visc_{\max}-\visc_{\min})(1-\chi(\boldvar x)),\quad\boldvar x \in \Omega\\
\chi(\boldvar x) &:=\prod_{i=1}^{n} \Big[1-\exp \big(-\delta \max (0, |\boldvar c_i - \boldvar x| - \omega/2)\big)\Big],
\end{align*}
and the right hand side of \eqref{eq:weakstokes} is
$\boldvar f(\boldvar x):=(0,0,\beta(\chi(\boldvar x)-1)),\enskip \beta=10$,
which forces the sinkers downwards. Homogeneous Dirichlet boundary conditions 
are enforced on the entire boundary $\partial\Omega$. We use the parameters
$\omega=0.1$, $\delta=200$ from \cite{RudiStadlerGhattas17} and fix the
number of sinkers, both in 2D and 3D experiments, to $n=24$. To test the
preconditioner, we vary the dynamic ratio 
$\text{DR}(\visc) := \mu_{\max}/\mu_{\min}$ and assign  
$\mu_{\max} = (\text{DR}(\visc))^{1/2}$ and $\mu_{\min} = (\text{DR}(\visc))^{-1/2}$.

\subsubsection{Influence of AL-parameter $\gamma$ for $[\mathbb{Q}_3]^d\times
          \mathbb{P}_2^{\text{\rm disc}}$ discretization}
\Cref{table:linear} summarizes the convergence behavior of the linear
solver for problems in 2D and 3D. Note that the standard
inverse-viscosity mass matrix Schur complement approximation
($\gamma=0$) requires a large number of iterations or fails to
converge for both, standard geometric multigrid and the parameter-robust
multigrid. This shows the limitations of these Schur complement
approximations for problems with strongly varying viscosity. Next,
note that the number of iterations decreases for larger $\gamma$. This
can be explained by the fact that the block preconditioner
\eqref{eq:block-precond} relies on accurate approximation of both, the
inverse of the (1,1)-block and the Schur
complement. As discussed in \cref{sec:AL}, larger
$\gamma$ improves the Schur complement approximation, but makes the
(1,1)-block of the augmented system more challenging to
solve. However, using the $\gamma$-robust smoother and transfer
operator, the effectiveness of the multigrid scheme for the
(1,1)-block does not degrade for large $\gamma$.
Hence, we
observe a decreasing iteration count as $\gamma$ increases due to the
improved approximation of the Schur complement.
Even when the standard multigrid preconditioner for $\gamma=0$
converges, we observe a shorter computation time for the AL  approach with, e.g., $\gamma=1000$, despite the
computationally more expensive $\gamma$-robust multigrid scheme. That
is, the savings in the number of iterations
overcompensate for the more computational intense and thus slower
$\gamma$-robust smooother and transfer operations.
Last, we note that since the augmented system becomes ill-conditioned
for large
$\gamma$, the value of $\gamma$ cannot be arbitrary large. In practice, we 
found that there is a wide range of $\gamma$ values that leads to
robust convergence. \cref{table:linear} shows that values of $\gamma$ from 
$10$ to $1000$ results in convergence within $30$ iterations in both 2D and 3D
experiments. The same observersation (for an even larger range) can be made from \cref{fig:nonlinear}. 

\begin{table}[ht]
	\label{table:linear}
	\centering
	\caption{Number of FGMRES iterations preconditioned by
          AL preconditioners $\mathbf P_1$, $\mathbf
          P_2$ for 2D and 3D sinker benchmark (left and right table,
          respectively). A dash means that the algorithm was not able
		  to decrease the residual by $10^{-6}$
          within $300$ iterations.  We use $[\mathbb{Q}_3]^d\times
          \mathbb{P}_2^{\text{disc}}$ elements on a quadrilateral mesh
          ($d=2$) and a hexahedral mesh ($d=3$) for velocity and
          pressure. For $\gamma=0$ and the Jacobi smoother and
          standard transfer, both Schur complement preconditioners
          reduce to the inverse viscosity-weighted mass matrix. The
          robust multigrid uses an F-cycle for the (1,1)-block.  For
          the setup of the 2D mesh, FGMRES and multigrid solver, see
          \cref{fig:topleftsolvefail}. For 3D tests, 3 mesh levels are
          used. Number of velocity degrees of freedom is 41,992,563 on
          finest and 680,943 on the coarsest level.}  \hfill
\begin{tabular}{l|rrrrr}
		\toprule
		\multicolumn{1}{l}{\colorbox{black!25}{2D sink.}}&\multicolumn{2}{c}{$\mathbf P_1$} && \multicolumn{2}{c}{$\mathbf P_2$}\\\midrule
		\multicolumn{1}{l}{$\gamma\backslash \text{DR}(\mu)$} & $10^6$ & $10^{10}$ && $10^6$ & $10^{10}$\\\midrule
	    \multicolumn{6}{c}{\small{Jacobi smoother \& standard transfer}}\\\midrule
		0    &55  & - && 55 & -\\\midrule
	    \multicolumn{6}{c}{\small{Robust smoother \& robust transfer}}\\\midrule
		0    &54  & - && 54 &  - \\
		10   &11  & 22  && 19  & 27 \\ 
		1000 &13  & 15  && 12  & 16 \\ 
		\bottomrule
\end{tabular}
\hfill
\begin{tabular}{l|ccc}
		\toprule
		\multicolumn{1}{l}{\colorbox{black!25}{3D sink.}}&$\mathbf P_1$ & $\mathbf P_2$\\\midrule
		\multicolumn{1}{l}{$\gamma\backslash \text{DR}(\mu)$}&$10^8$ & $10^{8}$\\\midrule
	    \multicolumn{3}{c}{\small{Jacobi smoother \& standard transfer}}\\\midrule
	    0    &51  & 51 \\\midrule
	    \multicolumn{3}{c}{\small{Robust smoother \& robust transfer}}\\\midrule
		0    &51  & 51\\
		10   &15  & 15\\ 
		1000 &14  & 14\\ 
		\bottomrule
\end{tabular}
\hfill
\end{table}

\subsubsection{Higher-order discretization}
Next, we examine how the solver performs when we
increase the polynomial order of the
discretization.
\Cref{table:k} summarizes the effect of discretization order on the 
efficiency of the solver.  We test the solver by 
fixing the number of mesh elements.
We find faster convergence for large $\gamma$ 
for all discretization orders. Indeed, \cref{lemma:spectrumest} makes no assumptions on the 
finite element discretizations.
Therefore, one can expect such convergence as long as the (1,1)-block
solver does not degrade as $\gamma$ increases and the choice of the
two matrices $\hat{\mathbf S}$ and $\mathbf W$ (pressure
mass matrix $\mathbf M_p$ and the inverse viscosity weighted pressure
mass matrix $\mathbf M_p(1/\visc)$ in our case) are spectrally
equivalent to the original system's Schur complement.  In particular,
the robustness of the multigrid scheme with respect to $\gamma$ 
holds for higher-order discretizations.

In addition, fixing $\gamma$, we observe a decrease in iteration counts 
as order of discretization grows. The observation has
two reasons. First, there are
more degree of freedoms on the coarsest mesh which can better resolve the viscosity variation when using higher order discretization.  
Second, the PSC smoother we use in the $\gamma$-robust multigrid 
scheme is more powerful for higher order elements: 
recall that the subspace decomposition we found is 
$V_i = \{\veltest_h\in V_h: \text{supp}(\veltest_h) \subset \text{star}(v_i)\}$. 
$V_i$ has dimension $(k-1)^d$, $d=2,3$ (the number of degrees of freedom in the
interior of $\text{star}(v_i)$) which grows as the order $k$ grows. 
Therefore, the operator 
$\mathbf D_{h,\gamma}^{-1} = \sum_i \mathbf I_i \mathbf A_i^{-1}\mathbf I_i^*$
becomes closer to the true inverse $\mathbf A_{h,\gamma}^{-1}$ for
higher order elements. 
We note that this powerful smoother comes at the cost of increased
computational and memory requirements.

\begin{table}[ht]
\centering
\caption{Number of iterations for higher-order $[\mathbb{Q}_k]^3\times
  \mathbb{P}_{k-1}^{\text{disc}}$ element discretizations,
  $k=2,3,4,5$. Results are for the 3D sinker problem with dynamic
  ratio $\text{DR}(\visc)=10^6$ on hexahedral mesh using $\mathbf P_2$
  preconditioner. Only two multigrid mesh levels are used. 
  The number of elements are fixed for all runs ($30$ elements per side of the
  unit cube on the finest level).
  For solver setup, see \cref{fig:topleftsolvefail}.
  }
\label{table:k}
\begin{tabular}{lcccccc}
\toprule
	$\gamma\backslash k$ &&& 2 & 3 & 4 & 5\\\midrule
	\multicolumn{7}{c}{\small{Jacobi smoother \& standard transfer}}\\\midrule
	0    &&&96 &85 & 91& 90 \\\midrule
	\multicolumn{7}{c}{\small{Robust smoother \& robust transfer}}\\\midrule
	0    &&&95 &78 & 79& 82 \\
	10   &&&28 &20 & 20& 21 \\
	1000 &&&27 &13 &  8&  6 \\
\bottomrule
\end{tabular}
\end{table}

\subsubsection{Comparison with monolithic multigrid schemes}
So far, we have focused on comparisons of Schur complement-based Stokes preconditioners. In
this subsection, we examine how the solver compares to a monolithic multigrid 
method, an alternative to Schur complement-based approaches that applies 
multigrid to the to saddle point system directly. 
As mentioned in the introduction, there are different variants of
monolithic multigrid methods. 
We compare with one that uses a Vanka smoother \cite{vanka1986block}, whose
implementation is available with appropriate solver options in PCPATCH 
\cite{FarrellKnepleyMitchellWechsung21}. The scheme is called the
\textit{Full Vanka Smoother} in
\cite{BorzacchielloLericheBlottiereGuillet17} in the context of
a finite volume discretization.  We test the solvers with different
number of sinkers and record the iteration counts in \cref{table:vanka}. 
Both the monolithic multigrid and the AL preconditioner approaches
perform well for a single sinker even with an extreme viscosity
variation. As the number of sinkers grows, the convergence of the monolithic
multigrid scheme slows down for large viscosity variation, whereas the AL 
preconditioner is able to maintain its convergence and only required moderately 
more iterations.

\begin{table}[ht]
    \centering
	\caption{Convergence comparison of FGMRES preconditioned with 
	F-cycle monolithic multigrid scheme with Vanka smoother 
	and preconditioned with the AL preconditioner $\mathbf P_2$ and the robust multigrid scheme. 
	Discretization based on Taylor-Hood elements $[\mathbb Q_2]^2\times \mathbb Q_1$
	and $[\mathbb Q_2]^2\times \mathbb P_1^{\text{disc}}$ for the monolithic multigrid and 
	the AL preconditioner, respectively. 
	Shown are the iteration counts to achieve $10^{6}$ residual reduction. 
	Both methods use 4 mesh levels and 5 pre/post-smoothing steps on each level. 
	The number of unknowns for the velocity is 132,098 for both
        method. The pressure unknowns are
	16,641 for $\mathbb Q_1$ elements and 49,152 for $\mathbb P_1^{\text{disc}}$
	elements. ``-'' indicates failure of the solver to converge in 300 
	iterations.}
	\label{table:vanka}
    \begin{tabular}{cccccccccc}
	\toprule
	&\multicolumn{4}{c}{Mono.\ MG with Vanka } && \multicolumn{4}{c}{AL precond. $\mathbf P_2$ ($\gamma=100$) }\\[0.2cm]\
	\#sinkers  $\backslash \text{DR}(\mu)$ & $10^4$ & $10^6$ & $10^8$ & $10^{10}$ &&  $10^4$ & $10^6$ & $10^8$ & $10^{10}$\\\midrule
	1    & 5 &  6 &  7 & 19  &&  4 &  6 &  8 & 9\\
	6    & 9 & 13 & 26 & -   && 10 & 14 & 16 & 16\\
	24   & 7 & 16 & 85 & -   &&  9 & 15 & 20 & 25\\
	\bottomrule
\end{tabular}
\end{table}

\subsubsection{Mesh refinement and parallel scalability}
For the parallel scalability experiments, we switch to tetrahedral meshes
and the $[\mathbb P_2\oplus B_3^F]^3\times\mathbb P_{0}$
discretization due to the limitation when using hexahedral
meshes in Firedrake discussed in \cref{sec:implem}.
In the table in \cref{fig:weakscal}, 
we verify that also for these meshes, fewer iterations are needed as $\gamma$ increases.
Then, we study the effect of mesh refinement
and the solver's weak parallel scalability. We observe that 
when the mesh is fine enough to sufficiently resolve the viscosity
variations,
the number of iterations 
becomes mesh-independent (\cref{fig:weakscal}, \textit{left}).
To examine the implementation scalability, we focus on the time of the
customized multigrid solve (over 80\% of the total solution time) and normalize
the time by the number of iterations (\cref{fig:weakscal}, \textit{right}). 
The multigrid solver maintains about 96\% parallel efficiency for weak
scalability on 3,584 cores comparing to 56 cores.  
In addition to increased communication costs (in particular for the coarse grid
solve), one reason that the solver slows down for the largest run is load
imbalance.
We note that the complexity of much of the code scales either with the number
of vertices (e.g.~the smoother) or the number of mesh elements (e.g.~assembly
and prolongation).  On the 512 nodes, the maximum number of vertices and 
mesh elements among MPI processes 
is $112\%$ and $63\%$ more than the average number, respectively.
For comparison, on 64 nodes the imbalance is only $47\%$ and $2\%$ respectively.
\begin{figure}[!htb]
\label{fig:weakscal}
\centering
\begin{center}
\begin{tabular}{ccrrr}
	\toprule
	\textcolor{gray}{$l=2$} &$\text{DR}(\mu)\backslash\gamma$ &  \phantom{000}0 & \phantom{00}10 &1000\\\midrule
	$\mathbf P_1$ & $10^6$ & 60 & 35 & 36\\
	\bottomrule
\end{tabular}
\end{center}
\vspace{0.5cm}
\definecolor{clr1}{RGB}{255, 246, 39}
\definecolor{clr2}{RGB}{124, 22, 28}
\definecolor{clr3}{RGB}{84, 170, 25}
\definecolor{clr4}{RGB}{137, 230, 251}
\definecolor{clr5}{RGB}{255, 122, 0} 
\definecolor{clr6}{RGB}{12, 59, 136}
\definecolor{clr7}{RGB}{53, 120, 120}
\definecolor{clr8}{RGB}{50, 49, 70}
\definecolor{clr9}{RGB}{255, 0, 255}
\definecolor{clr10}{RGB}{0, 122, 255}
\definecolor{clr11}{RGB}{255, 122, 122}
\definecolor{clr12}{RGB}{130, 130, 130}
\definecolor{clr13}{RGB}{180, 180, 180}

\hfill
\begin{tikzpicture}[scale=0.6] 
  \begin{semilogxaxis}[
    xtick=data,
    xticklabels={56 \textcolor{gray}{($1$)}, 
	             448 \textcolor{gray}{($2$)}, 
				 3584 \textcolor{gray}{($3$)}, 
				 28672 \textcolor{gray}{($4$)}},
    ybar stacked,
    legend style={
      draw=none,
      at={(0.6,0.98)},
      anchor=north,
      legend columns=3,
      /tikz/every even column/.append style={column sep=0.5cm}},
      legend entries={\bf Total time},
    ymin=0,
    bar width=20pt,
    width=3.5in,
    height=2.4in]
    \addplot[ybar,fill=black!10]
    table[x=cs,y=total] 
	  {data/weak_frontera_result_detail};
  \end{semilogxaxis}
  \begin{semilogxaxis}[
	axis y line*=left,
	xticklabels={},
	yticklabels={},
	xticklabel style={align=center},
    legend style={
      draw=none,
      at={(0.25,0.98)},
      anchor=north,
	  legend entries={\bf Iteration}},
  xlabel=CPU cores $\rightarrow$,
	  xlabel=CPU cores \textcolor{gray}{($l$)} $\rightarrow$,
	  ymajorgrids,ylabel=wall-time (sec.)$\rightarrow$,
    ymin=0,
    ymax=50,
    width=3.5in,
    height=2.4in]
	\addplot[point meta=explicit,nodes near coords
	  align={above},
	  nodes near coords=\pgfmathprintnumber{\pgfplotspointmeta},
	  color=black,mark=square*] 
	  table[meta=it,x=cs,y=it]
	  {data/weak_frontera_result_detail};
  \end{semilogxaxis}
\end{tikzpicture} 
\hfill
\begin{tikzpicture}[scale=0.6] 
  \begin{semilogxaxis}[
    xtick=data,
    xticklabels={56 \textcolor{gray}{($1$)}, 
	             448 \textcolor{gray}{($2$)}, 
				 3584 \textcolor{gray}{($3$)}, 
				 28672 \textcolor{gray}{($4$)}},
	xticklabel style={align=center},
    ybar stacked,
	legend cell align={left},
    legend style={
      draw=none,
      at={(-0.1,1.3)},
      anchor=north west,
      legend columns=3,
      /tikz/every even column/.append style={column sep=0.5cm}},
  legend entries={\bf Level 0 (Coarse), \bf Level $l$, \bf Level $l-1$, \bf
      Level $l-2$, \bf Level $l-3$,
	  \bf Level 0 (Coarse)},
	  xlabel=CPU cores \textcolor{gray}{($l$)} $\rightarrow$,
	  ymajorgrids,ylabel=wall-time (sec.) per iteration $\rightarrow$,
    ymin=0,
    ymax=4,
    bar width=20pt,
    width=3.5in,
    height=2.4in]
    \addplot[color=black, fill=clr5]
    table[x=cs,y expr={(\thisrow{mglevel0})/\thisrow{it}}]
	  {data/weak_frontera_result_detail};

    \addplot[color=black, fill=clr10]
    table[x=cs,y expr={(\thisrow{mglevel4})/\thisrow{it}}] 
	  {data/weak_frontera_result_detail};

    \addplot[color=black, fill=clr10!80!black]
    table[x=cs,y expr={(\thisrow{mglevel3})/\thisrow{it}}] 
	  {data/weak_frontera_result_detail};

    \addplot[color=black, fill=clr10!40!black]
    table[x=cs,y expr={(\thisrow{mglevel2})/\thisrow{it}}]
	  {data/weak_frontera_result_detail};
    \addplot[color=black,
	point meta=explicit,
	nodes near coords=\pgfmathprintnumber{\pgfplotspointmeta}\%,
    nodes near coords align={vertical},
	  fill=clr10!10!black]
    table[meta=eff,x=cs,y expr={(\thisrow{mglevel1})/\thisrow{it}}]
	 {data/weak_frontera_result_detail};
  \end{semilogxaxis}
\end{tikzpicture} 
\hfill
	\caption{Weak scalability results on TACC's
	Frontera (Intel CLX nodes) for the multi-sinker problem using $\mathbf P_1$
	with parameter-robust multigrid solver for the (1,1)-block. 
	The discretization is based on $[\mathbb P_2\oplus B_3^F]^3\times \mathbb
    P_{0}$ elements on tetrahedral mesh. 
	In the table, we show the 
	number of FGMRES iterations to achieve $10^6$ residual reduction with
	different value of $\gamma$ using three mesh levels with
	24,915,603 dofs. In the scalability tests, we use $\gamma=10$. 
	The coarsest mesh has 402,735 dofs for all
	runs. The problem size is increased to maintain about 55K unknowns per
	core.  With $l$ we indicate the number of refinement levels of the finest
	mesh compared to the coarse mesh. On the left we show the iteration numbers
	and the total run times, and on the right the parallel efficiency of one
	multigrid cycle compared to the cycle on $56$ cores. The largest run has
	overall 1.6B unknowns. For problem setup, see \cref{fig:topleftsolvefail}.}
\end{figure}
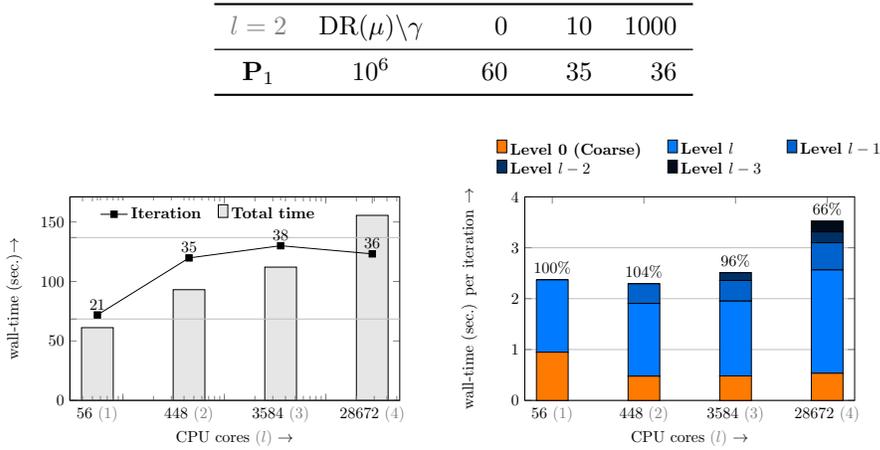

\subsection{Nonlinear Stokes flow with viscoplastic rheology}\label{sec:tensor}
So far, we have used the solver for linear Stokes equations with
scalar, strongly spatially varying viscosity. In this section, we
examine the solver for nonlinear problems where, upon linearization,
the viscosity field $\mu$ is an anisotropic fourth-order tensor.

\subsubsection{Linearization}\label{subsec:viscplast-Newton}
We apply the solver to Newton linearizations of nonlinear Stokes flow with a
viscoplastic rheology, i.e.,
$$
	\visc(\boldvar x,\strainrateinvII)
	=\dfrac{\viscref\viscstressyield}{2\viscref\strainrateinvII+
	\viscstressyield},
$$
where $\viscref>0$ is a reference viscosity, and $\viscstressyield>0$ is a
given yield stress. We refer to 
$\visc(\boldvar x,\strainrateinvII)$ 
as effective viscosity. Fluids with this rheology have two
fundamentally different behavior regimes. For small
$\strainrateinvII$, i.e., in the viscous regime, they behave like a Newtonian fluid with constant viscosity $\viscref$
. In the plastic regime,
i.e., for large $\strainrateinvII$, the effective viscosity becomes
small such that the second invariant of the stress is bounded by
$\viscstressyield$. Such fluids occur, for instance, in
the geosciences \cite{SpiegelmanMayWilson16,Ranalli95}.
We use the stress--velocity
Newton linearizations from \cite{RudiShihStadler20}, which
leads, in the $k$-th iteration, to the linear Stokes system for the
Newton increment variables $(\tilde\vel,\tilde p)$,
$$
\begin{aligned}
-\nabla\cdot
\left[\dfrac{2\viscref\viscstressyield}{2\viscref\strainrateinvII^{\vel_{k-1}}+\viscstressyield}
\bigg(\mathbb I -
\dfrac{\left(\strainratetensor(\vel_{k-1})\otimes \viscstresstensor_{k-1}\right)_{\text{sym}}}
{2\strainrateinvII^{\vel_{k-1}}\max(\viscstressyield,\viscstressinvII)}
\bigg)\strainratetensor(\tilde{\vel})\right]
+ \nabla \tilde p &= -\boldvar r^{\vel}_{k-1}
\\
-\nabla\cdot\tilde{\boldvar u} &= -r^{p}_{k-1}.
\end{aligned}
$$
Here, $\viscstresstensor_{k-1}$ is the independent variable for the
viscous stress tensor that is introduced in the stress--velocity
Newton method, $\boldvar r^{\vel}_{k-1}$ and $r^{p}_{k-1}$ are
residuals, $\mathbb I$ denotes the identity tensor, and $\otimes$
denoting the outer product between two second-order tensors. Details
of this stress--velocity Newton method and an update formula for
$\viscstresstensor$ can be found in \cite{RudiShihStadler20}, where it
is also shown that compared to a standard Newton linearization, this
alternative linearization improves nonlinear convergence. Note
that a standard Newton method requires solution of a very similar system, with
the main difference being that $\viscstresstensor_{k-1}$ is replaced by
$\strainratetensor(\vel_{k-1})$.

\subsubsection{Problem setup}
The domain $\Omega$ is a $120$ km
$\times$ $7.5$ km $\times$ $30$ km rectangular box which has a viscoplastic
lower layer with reference viscosity 
$\visc_1$ and yield stress $\viscstressyield$, and a constant viscosity
upper layer with viscosity $\visc_2$. There is a notch-like domain
introduced in the lower layer with constant viscosity $\visc_3$. 
The geometry is identical in the $y$ direction. At the left and
right sides, we prescribe inflow boundary conditions, $\boldvar u (x,y,z)\cdot 
\boldvar{n} = -u_0(1+y)$,
and at the fore, aft, and bottom boundaries we use $\boldvar u (x,y,z)\cdot 
\boldvar{n} = 0$. At the top and for tangential velocities, we use homogeneous 
Neumann boundary conditions, and $\boldvar f \equiv \boldvar 0$. 
We use the parameters $u_0 = 2.5~\text{mm/yr}$, $\mu_1 =
10^{24}~\text{Pa}\:\text{s}$, $\mu_2 = 10^{21}~\text{Pa}\:\text{s}$, 
$\mu_3 = 10^{17}~\text{Pa}\:\text{s}$ and $\viscmin = 10^{15}~\text{Pa}\:\text{s}$
and nondimensionalized them
by $H_0 = 30$ km, $U_0 = 2.5\times10^{-3}\text{ (m/year)}\times
1/3600/365.25/24 \text{ (year/s) }$ and $\eta_0=10^{21} \text{Pa}\cdot\text{s}$.
The mesh in the $x\times z$-plane is constructed using an unstructured quadrilateral mesh 
using Gmsh\cite{GeuzaineRemacle09}, which is then extruded
it the $y$-direction using Firedrake. 
The mesh resolves the boundary between the notch-like domain and the boundary
between the upper and lower layers.

To set up the AL preconditioner, we use the scalar quantity
in front of the fourth order tensor, i.e., the effective viscosity at
iteration $k-1$, i.e.,
\begin{equation*}
\visc(\boldvar x, \strainrateinvII^{\vel_{k-1}}) =\dfrac{\viscref\viscstressyield}{2\viscref\strainrateinvII^{\vel_{k-1}}+\viscstressyield}
\end{equation*}
to compute the inverse viscosity-weighted pressure
mass matrix.
In \cref{fig:3D}, we show the effective viscosity and the second invariant 
of the strain rate tensor for the solution of the nonlinear
problem. The high strain rate shear bands occur dynamically due to the
nonlinearity of the rheology. At
convergence, the effective viscosity field varies over seven orders of
magnitude.

\begin{figure}
  \begin{center}
    \includegraphics[width=0.45\textwidth]{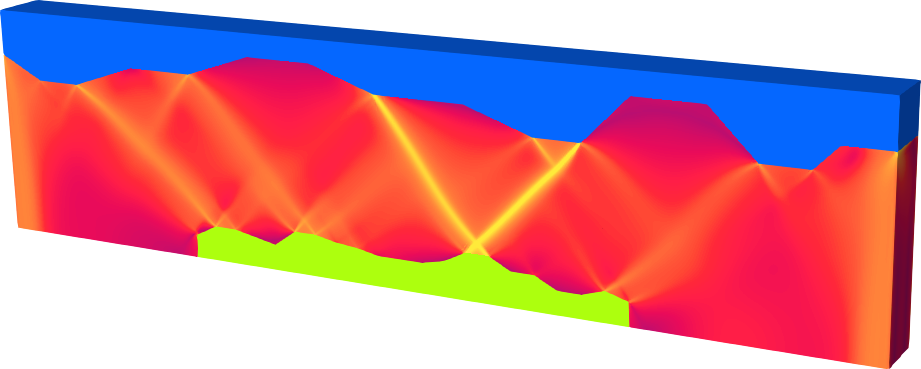}
	\hspace{.5cm}
    \includegraphics[width=0.45\textwidth]{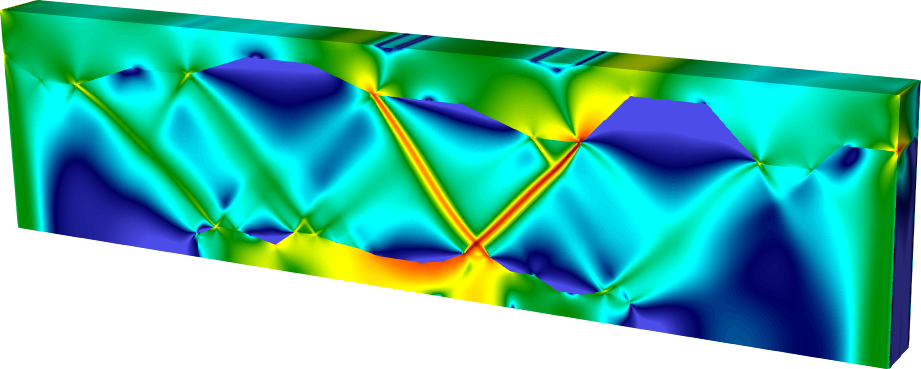}
	\\
	\vspace{-0.3cm}\hspace{-1cm}\includegraphics[width=0.3\textwidth]{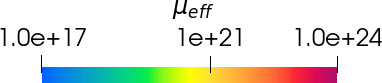}
	\hspace{2cm}
	\includegraphics[width=0.3\textwidth]{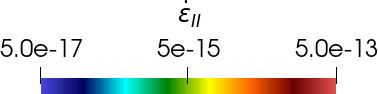}
	\hspace{1cm}
  \end{center}
	\caption{Effective viscosity (\textit{left}) and the second invariant of
	the strain rate tensor (\textit{right}) pertaining to the 3D compressional 
	problem with composite rheology described in \cref{sec:tensor}. 
	}
  \label{fig:3D}
\end{figure}

\def\datafold{data/nonlinear/rate1}
\begin{figure}
   \centering
	\begin{tikzpicture}[scale=1]
   \centering
   \begin{axis}[title={},
   grid = major,
   ymode = log,
   width=9.5cm,
   height=5.5cm,
   xlabel=nonlinear iteration, 
   ylabel=\# linear iteration, 
   legend pos=outer north east,
   legend image post style={mark indices={}},
   xmin=0.5, xmax=14.5, legend cell align={left}]

   \addplot[mark=none, black, dashed, domain=0:15,forget plot] {300};

   \addplot[blue!80!black,mark=x,mark size=3pt, only marks,line width=1.5pt] table {\datafold/S_stressvel_comp_vo3_gamma0.out};
   \addlegendentry{$\gamma=0$, Failed}
  
   \addplot[purple!80!black,mark=*, mark size=1.5pt,mark indices={1,2,3}] table {\datafold/S_stressvel_comp_vo3_gamma01.out};
	   \addlegendentry{$\gamma=0.1$}

   \addplot[only marks,purple!80!black,mark=x, mark size=3pt,mark indices={4},line width=1.5pt] table {\datafold/S_stressvel_comp_vo3_gamma01.out};
   \addlegendentry{$\gamma=0.1$, Failed}

   \addplot[orange,mark=square*,mark size=1.5pt] table {\datafold/S_stressvel_comp_vo3_gamma1.out};
	   \addlegendentry{$\gamma=1$ (27)}

   \addplot[green!80!black,mark=triangle*,mark size=2.5pt] table {\datafold/S_stressvel_comp_vo3_gamma10.out};
	   \addlegendentry{$\gamma=10$ (29)}
   
   \addplot[yellow!80!black,mark=diamond*,mark size=2.5pt] table {\datafold/S_stressvel_comp_vo3_gamma100.out};
	   \addlegendentry{$\gamma=100$ (41)}

   \addplot[red!80!black,mark=star, mark size=2pt,mark options={line width=1pt}] table {\datafold/S_stressvel_comp_vo3_gamma1000.out};
	   \addlegendentry{$\gamma=1000$ (45)}
   \end{axis}
   \end{tikzpicture}
   \caption{Number of iterations for solving the linearized
     Stokes systems ($y$-axis) in each Newton linearization ($x$-axis)
	 for viscoplastic rheology problem. 
	The average number of iterations for the linearized system are
    shown in parenthesis in the legend. 
	The stress--velocity Newton solver is run until reaching
	$10^8$ nonlinear residual reduction.
	The discretization is based on 
	$[\mathbb{Q}_3]^3\times\mathbb{P}_{2}^{\text{disc}}$ element with 25,122,399
	unknowns. The multigrid hierarchy has 3 mesh levels. 
	For solver settings (FGMRES, multigrid), 
	see \cref{fig:topleftsolvefail}.
	}
   \label{fig:nonlinear}
\end{figure}
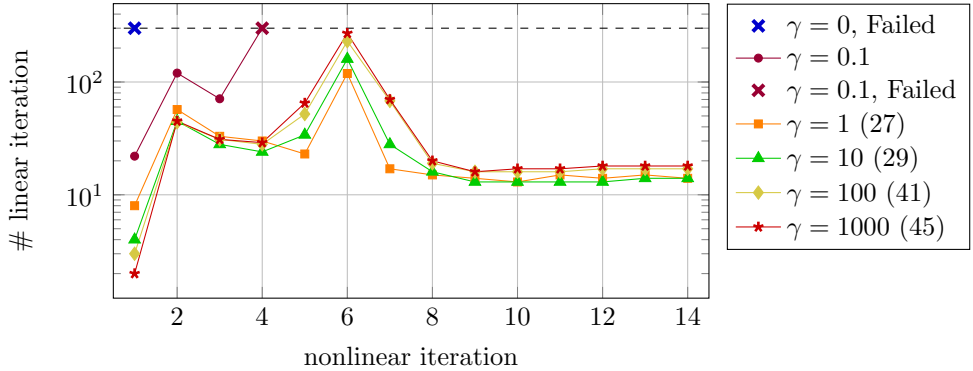

\subsubsection{Linear and nonlinear convergence}
\cref{fig:nonlinear} shows the convergence history for the nonlinear problem.
First, we observe that using the AL preconditioner is necessary 
for the problem. 
For $\gamma=0$, i.e., the inverse viscosity-weighted pressure mass
matrix as the Schur complement approximation in
\eqref{eq:block-precond}, the linear solver
fails to solve the first stress--velocity Newton linearization 
within $300$ iterations.
Second, the number of linear iterations required in each Newton step varies.
For instance, the linearization arising in the $6$-th nonlinear iteration seems
particularly difficult to solve. This happens as the linearized systems in different
nonlinear iterations may have different characteristics and some may
be more difficult to solve than others.  On average, with large enough
$\gamma$ ($\gamma > 0.1$), the AL solver requires  between 27
and 45 linear iterations.  Lastly, we note that while the speed of
convergence depends on the choice of $\gamma$, robust convergence is
observed for a wide range of $\gamma$'s. In particular, 
values of $\gamma$ from $1$ to $1000$ result in an
average of under 50 iterations per linear solve for a problem that
could not be solved without the AL preconditioner.

\subsubsection{Parallel scalability}
Figure \ref{fig:weakscalnonlinear} shows the parallel
scalability of the solver when applying to the nonlinear problem
on hexahedral meshes with the $[\mathbb{Q}_2]^3\times
\mathbb{P}_1^{\text{disc}}$ discretization.  We note that as of writing,
Firedrake only supports hexahedral meshes via
extrusion of quadrilateral meshes, which has the consequence that only the two
dimensional base mesh can be distributed in parallel.  This limits the number
of cores that can be used and makes the distribution over a large number of
cores more challenging than in the case of tetrahedral meshes.
For the slab domain under consideration, we are still able to scale to 1536
cores and 151 million unknowns, with a parallel efficiency of about 57\%
percent compared to a small run on 24 cores.  We note that for the largest run,
the maximum number of vertices and mesh elements on the finest level among MPI
processes compared to the average number is $40\%$ and $18\%$ larger,
respectively.  A fourth run was not possible as then the number of cores would
have exceeded the number of quadrilaterals in the base mesh.
\begin{figure}[!htb]
\label{fig:weakscalnonlinear}
\centering
\definecolor{clr1}{RGB}{255, 246, 39}
\definecolor{clr2}{RGB}{124, 22, 28}
\definecolor{clr3}{RGB}{84, 170, 25}
\definecolor{clr4}{RGB}{137, 230, 251}
\definecolor{clr5}{RGB}{255, 122, 0} 
\definecolor{clr6}{RGB}{12, 59, 136}
\definecolor{clr7}{RGB}{53, 120, 120}
\definecolor{clr8}{RGB}{50, 49, 70}
\definecolor{clr9}{RGB}{255, 0, 255}
\definecolor{clr10}{RGB}{0, 122, 255}
\definecolor{clr11}{RGB}{255, 122, 122}
\definecolor{clr12}{RGB}{130, 130, 130}
\definecolor{clr13}{RGB}{180, 180, 180}

\hfill
\begin{tikzpicture}[scale=0.6] 
  \begin{semilogxaxis}[
    xtick=data,
    xticklabels={24 \textcolor{gray}{($1$)}, 
	             192 \textcolor{gray}{($2$)}, 
				 1536 \textcolor{gray}{($3$)}},
    ybar stacked,
    legend style={
      draw=none,
      at={(0.6,0.98)},
      anchor=north,
      legend columns=2,
      /tikz/every even column/.append style={column sep=0.5cm}},
      legend entries={\bf Total time},
    ymin=0,
    ymax=100,
    bar width=20pt,
    width=3.5in,
    height=2.4in]
    \addplot[ybar,fill=black!10]
    table[x=ns,y=total] 
	  {data/nonlinear_weak_greene_result_detail};
  \end{semilogxaxis}
  \begin{semilogxaxis}[
	axis y line*=left,
	xticklabels={},
	yticklabels={},
	xticklabel style={align=center},
    legend style={
      draw=none,
      at={(0.25,0.98)},
      anchor=north,
	  legend entries={\bf Iteration}},
  xlabel=CPU cores $\rightarrow$,
	  xlabel=CPU cores \textcolor{gray}{($l$)} $\rightarrow$,
	  ymajorgrids,ylabel=wall-time (sec.)$\rightarrow$,
    ymin=0,
    ymax=12,
    width=3.5in,
    height=2.4in]
	\addplot[nodes near coords
	  align={above},
	  nodes near coords=\pgfmathprintnumber{\pgfplotspointmeta},
	  color=black,mark=square*] 
	  table[x=ns,y=it]
	  {data/nonlinear_weak_greene_result_detail};
  \end{semilogxaxis}
\end{tikzpicture} 
\hfill
\begin{tikzpicture}[scale=0.6] 
  \begin{semilogxaxis}[
    xtick=data,
    xticklabels={24 \textcolor{gray}{($1$)}, 
	             192 \textcolor{gray}{($2$)}, 
				 1536 \textcolor{gray}{($3$)}},
	xticklabel style={align=center},
    ybar stacked,
	legend cell align={left},
    legend style={
      draw=none,
      at={(0.1,1.3)},
      anchor=north west,
      legend columns=2,
      /tikz/every even column/.append style={column sep=0.5cm}},
  legend entries={\bf Level 0 (Coarse), \bf Level $l$, \bf Level $l-1$, \bf
      Level $l-2$, \bf Level $l-3$,
	  \bf Level 0 (Coarse)},
	  xlabel=CPU cores \textcolor{gray}{($l$)} $\rightarrow$,
	  ymajorgrids,ylabel=wall-time (sec.) per iteration$\rightarrow$,
    ymin=0,
    ymax=7,
    bar width=20pt,
    width=3.5in,
    height=2.4in]
    \addplot[color=black, fill=clr5]
    table[x=ns,y expr={(\thisrow{mglevel0})/\thisrow{it}}]
      {data/nonlinear_weak_greene_result_detail};

    \addplot[color=black, fill=clr10]
    table[x=ns,y expr={(\thisrow{mglevel1})/\thisrow{it}}] 
      {data/nonlinear_weak_greene_result_detail};

    \addplot[color=black, fill=clr10!80!black]
    table[x=ns,y expr={(\thisrow{mglevel2})/\thisrow{it}}] 
      {data/nonlinear_weak_greene_result_detail};


    \addplot[color=black,
	  point meta=explicit,
	  nodes near coords=\pgfmathprintnumber{\pgfplotspointmeta}\%,
      nodes near coords align={vertical},
	  fill=clr10!10!black]
    table[meta=eff,x=ns,y expr={(\thisrow{mglevel3})/\thisrow{it}}]
	 {data/nonlinear_weak_greene_result_detail};
  \end{semilogxaxis}
\end{tikzpicture} 
\hfill
\caption{Weak scalability results on NYU Greene for 
  the first linear solve of the nonlinear Stokes problem 
  with $\mathbf P_2$ with parameter robust
  multigrid solver for the (1,1)-block. 
  The coarsest mesh is the same across the runs and the problem size is
  increased to maintain approximately 100k unknowns per core.
  With $l$ we indicate the number of refinement levels of the finest mesh compared to the coarse mesh.
  On the left we show the iterations numbers and the total
  times, and on the right the parallel efficiency of one multigrid
  cycle compared to the cycle on $24$ cores. The largest run has
  151 million unknowns.}
\end{figure}
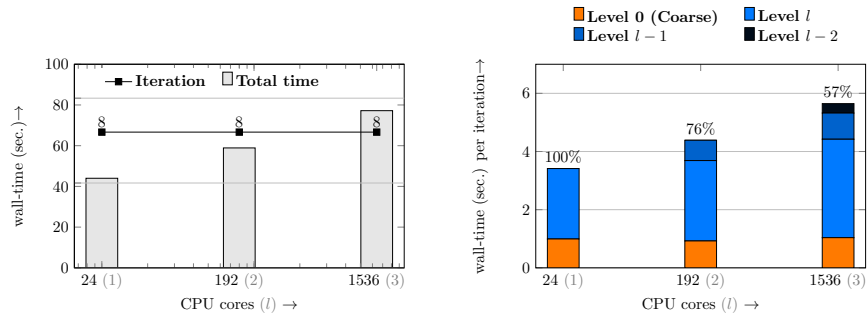

\section{Conclusions}
In this work we developed a scalable preconditioner for the Stokes equations
with varying viscosity.  The preconditioner combines an augmented Lagrangian
term, a mass matrix based Schur complement approximation, and a robust
multigrid scheme for the resulting nearly singular (1,1)-block.  The two main
contributions are eigenvalue estimate for the Schur complement approximation as
well as a multigrid scheme for the (1,1)-block for the popular
$[\mathbb{Q}_k]^d\times\mathbb{P}_{k-1}^{\text{disc}}$ discretization on
quadrilateral/hexahedral meshes.  Numerical experiments confirm robustness even
for large viscosity contrasts, scalability to large problems in three
dimensions, and show that the preconditioner can be combined with the
stress-velocity Newton method of~\cite{RudiShihStadler20} to solve nonlinear
Stokes flow with viscoplastic rheology.  Finally, we remark that we expect that
the approach here can be used for the development of preconditioners for the
Navier-Stokes equations, in the same way that the multigrid scheme developed
in~\cite{FarrellMitchellScottWechsung20b} yields the Reynolds-robust
preconditioner for the Navier--Stokes equations on simplicial meshes developed
in~\cite{FarrellMitchellScottWechsung20a}.

\section*{Acknowledgments}
We appreciate many helpful discussions about the Firedrake project
with Lawrence Mitchell.  Our simulations used the Greene HPC system at
NYU as well as the Frontera computing project at the Texas Advanced
Computing Center.  Frontera is made possible by National Science
Foundation award OAC-1818253.

\bibliographystyle{siamplain}
\bibliography{references,ccgo,firedrake}
\end{document}